\global\let\AddToReset=\@addtoreset}
\newtheorem{theorem}{Theorem}[section]
\newtheorem{proposition}{Proposition}[section]
\newenvironment{definition}{\begin{@definition}\rm}{\end{@definition}}
\newtheorem{@definition}{Definition}[section]
\newenvironment{remark}{\begin{@remark}\rm}{\end{@remark}}
\newtheorem{@remark}{Remark}[section]
\def\N{\mathbb{N}}
\def\Z{\mathbb{Z}}
\def\R{\mathbb{R}}
\def\C{\mathbb{C}}
\def\E{\mathrm E}
\def\P{\mathrm P}
\def\text{\mbox}
\def\1{{\bf 1}}
\def\i{\mathrm i}
\def\d{\mathrm d}
\def\e{\mathrm e}
\def\vep{\varepsilon}
\newcommand{\noi}{\noindent}
\newcommand {\nn}{\nonumber}
\begin{document}

\title{
Joint temporal and contemporaneous aggregation of\\ random-coefficient AR(1) processes with infinite variance
}

\author{Vytaut\.e  Pilipauskait\.e$^{1}$,  \ Viktor Skorniakov$^{2}$, \ Donatas Surgailis$^{2}$}
\date{\today \\ \small
	\vskip.2cm
	$^1$Aarhus University, Department of Mathematics, Ny Munkegade 118, 8000 Aarhus C, Denmark\\
	$^2$Vilnius University, Faculty of Mathematics and Informatics, Naugarduko 24, 03225 Vilnius, Lithuania}

\maketitle

\begin{abstract}
	
We discuss joint temporal and contemporaneous aggregation of $N$ independent copies of random-coefficient AR(1) process driven by i.i.d.\ innovations in the domain of normal attraction of an $\alpha$-stable distribution, $ 0< \alpha \le 2,$ as both $N$ and the time scale $n$ tend to infinity, possibly at a different rate.
Assuming that the tail distribution function of the random autoregressive coefficient regularly varies at the unit root with exponent $\beta > 0$,
we  show that, for $\beta < \max (\alpha, 1)$,
the joint aggregate displays a variety of stable and non-stable limit behaviors with stability index depending on $\alpha$, $\beta$ and the mutual increase rate of $N$ and $n$.
The paper extends the results of Pilipauskait\.e and Surgailis (2014) from $\alpha =2$ to $0 < \alpha < 2$.

\end{abstract}

\begin{quote}

{\bf Keywords:} {\small
Autoregressive model; Panel data; Mixture distribution; Infinite variance, Long-range dependence; Scaling transition; Poisson random measure; Asymptotic self-similarity.}

{\bf 2010 MSC: 60F05, 62M10.}

\end{quote}

\section{Introduction}

Contemporaneous aggregation of random-coefficient AR(1) (RCAR(1)) processes is an important model for long-range dependence (LRD, also often referred to as long memory) in econometrics, see Granger \cite{gran80}, Robinson  \cite{robi78}, Zaffaroni \cite{zaff04}, Beran et al.\ \cite{ber13}. It explains how LRD can arise in a time series of macroeconomic variable, which is aggregate such as average or sum over a very large number of 
different micro-variables, each evolving by AR(1) with a random coefficient.
The concentration of the distribution of a random autoregressive coefficient $a$ at the unit root $a=1$, governed by the parameter $\beta > 0$ in
\begin{equation} \label{atail}
\P(a > 1-x) \sim \operatorname{const} 
x^\beta,  \quad x \downarrow  0,
\end{equation}
determines various properties of both the RCAR(1) process and the (limit) aggregate.
Particularly, for $1 < \beta < 2$, the RCAR(1) process 
exhibits LRD in the sense that its autocovariance function is absolutely non-summable since it decays slowly like $t^{1-\beta}$ as the time lag $t$ between two observations increases.
Furthermore, the limit of the normalized aggregate of independent RCAR(1) processes is a Gaussian process, which has the same autocovariance function and can obey a particular case of ARFIMA model, see Granger \cite{gran80} and the review paper Leipus et al.\ \cite{lei2014}.

Statistical inference for RCAR(1) model, especially estimation of the distribution of a random coefficient, has been extensively studied, see Robinson \cite{robi78}, Beran et al.\ \cite{ber2010}, Celov et al.\ \cite{celo07, celo2010}, Jirak \cite{jir13},
Leipus et al.\ \cite{lei2006, lei2016, lei2018a}.
Most of these papers deal with a panel $\{ X_i (t), \, i=1, \dots, N, \, t = 1, \dots, n\}$ of $N$ independent RCAR(1) processes observed over the time-period of length $n$. As $N$ and $n$ increase, possibly at different rate, statistical (dependence) properties of such a panel are determined by the parameter $\beta$ in \eqref{atail}.  Particularly, Pilipauskait{\. e} and Surgailis \cite{pili14} proved that, for $1 < \beta < 2$, the distribution of the sample mean $(Nn)^{-1} \sum_{i=1}^N \sum_{t=1}^n X_i (t)$ is asymptotically normal if $N^{1/\beta}/n \to \infty$, and it is symmetric $\beta$-stable if $N^{1/\beta}/n \to 0$. In the `intermediate' case $N^{1/\beta}/n \to \mu \in (0,\infty)$, this limit distribution is more complicated and has an integral representation with respect to (w.r.t.) a certain Poisson random measure. 
Leipus et al.\ \cite{lei2018b} studied the limit distribution of sample variance and sample covariances for such an RCAR(1) panel.

All the above works refer to the case of finite-variance innovations, however, the RCAR(1) model with infinite variance also presents
considerable interest since heavy tails are important in financial modeling (see, e.g.\ Mikosch \cite{miko03} and the references
therein). Puplinskait\.e and Surgailis \cite{pupl10}
studied contemporaneous aggregation of independent copies $\{ X_i (t), \, t \in \Z \}$, $i=1,2, \dots$, of an RCAR(1) process
\begin{equation} \label{AR}
X (t) = a X (t-1) + \vep (t), \quad t\in \Z,
\end{equation}
where $\{ \vep (t), \, t \in \Z \}$ is a sequence of i.i.d.\ random variables (r.v.s) belonging to the domain of normal attraction of an $\alpha$-stable distribution, $0 < \alpha \le 2$, and the autoregressive coefficient $a \in [0,1)$ is an r.v.\ independent of $\{ \vep (t), \, t \in \Z \}$  and having a density $\phi (x)$, $x \in [0,1)$, such that
\begin{equation}
\phi(x) \sim  \psi_1 (1-x)^{\beta-1}, \quad x \uparrow 1,
\label{betasing}
\end{equation}
for some $\beta > 0$ and $\psi_1 > 0$.
In \cite{pupl10} it was proved that, for $\beta > 1$, the normalized aggregate $\{ N^{-1/\alpha} \sum_{i=1}^N X_i(t), \allowbreak \, t \in \Z\}$ tends (in the sense of weak convergence of finite-dimensional distributions) to the $\alpha$-stable mixed moving average process $\bar X$ given by
\begin{eqnarray}\label{barX}
 \bar X(t) := \sum_{s\le t} \int_{[0,1)} x^{t-s} M_s(\d x), \quad t \in \Z,
\end{eqnarray}
where $\{ M_s (\d x), \, s \in \Z \}$ are independent copies of an $\alpha$-stable random measure $M(\d x)$ on $[0,1)$ with
control measure $\P(a \in \d x)$. For $1<\beta<\alpha$, the limit aggregate $\bar X$ has distributional LRD in the sense that its partial sums normalized by $n^{H}$, $H := 1- (\beta-1)/\alpha \in (1/\alpha, 1)$, tend to an $\alpha$-stable, $H$-self-similar process $\Lambda_{\alpha, \beta}$
with stationary dependent increments. See Section~\ref{sec2} for its definition. 

In this paper we study joint temporal and contemporaneous aggregation of independent copies of RCAR(1) process in \eqref{AR},
driven by i.i.d.\ $\alpha$-stable or related infinite variance innovations with a random autoregressive coefficient
as in \eqref{betasing}. We assume that both the number $N$ of individual processes and the time scale $n$ tend to infinity, possibly
at a different rate and extend the results of Pilipauskait\.e and Surgailis \cite{pili14}, who considered the finite variance case $\alpha =2$. It turns out that, similarly
to \cite{pili14}, the limit behavior of the joint aggregate
\begin{equation} \label{aggreSum}
S_{N,n}(\tau) :=  \sum_{i=1}^N \sum_{t=1}^{[n\tau]} X_i (t), \quad \tau \ge 0,
\end{equation}
depends on $\beta $ and the mutual increase rate of $N, n$;
moreover, it also depends on $\alpha$ leading to a complex  panorama of the limit distributions.
Theorem~\ref{thm2} below provides a nearly complete description of these limit distributions of suitably normalized $S_{N,n}= \{ S_{N,n} (\tau), \, \tau \ge 0\}$ in terms of parameters $0 < \alpha \le 2,$
$\beta >0 $  (with exception of $\alpha= \beta$  and $\alpha<\beta=1$), as $N, n \to \infty$.
In Table~\ref{table1} we summarize the results of Theorem \ref{thm2} for the sample mean
$(Nn)^{-1} S_{N,n} (1)$, including the cases when the mean of $X(t) $ and $S_{N,n}(1)$ does not exist.
\begin{table}[htbp] 	
	\begin{center}
		\begin{tabular}{ l  l  l }
			\hline
			{Parameter region} & {Mutual increase rate of $N, n$} & Limit distribution\\
			\hline
			$1 \le \beta < \alpha$ & {$N^{1/\beta}/n \to \infty $} & $\alpha$-stable
			\\
			$0  <\beta  < \min(\alpha,1)$ & {$N^{1/\beta}/n \to \infty $} & $(\alpha\beta)$-stable \\
			$0 < \beta < \alpha$ & {$N^{1/\beta}/n  \to 0$} & $\beta$-stable\\
			$0 < \beta < \alpha$ & $N^{1/\beta}/n  \to
			\mu \in (0,\infty) $  & `intermediate Poisson' \\
			\hline
			$\alpha < \beta < 1$ & {$N^{1/\gamma\beta}/n \to \infty $} & $(\alpha\beta)$-stable \\
			$\alpha < \beta < 1$ & $N^{1/\gamma\beta}/n \to 0 $ & $\alpha$-stable \\
			$\alpha < \beta < 1$ & $N^{1/\gamma\beta}/n \to \mu \in (0,\infty)$  & $(\alpha\beta)$-stable + $\alpha$-stable \\			
			\hline
			$ \beta > \max(\alpha,1)$ & arbitrary & $\alpha$-stable \\
			\hline
		\end{tabular}
	\end{center}
	\caption{Limit distribution of the sample mean or $S_{N,n} (1)$ in \eqref{aggreSum},
for $0 < \alpha \le 2$, $\beta > 0$ with $\gamma := \frac{1-\alpha}{1-\beta}$.
	}\label{table1}
\end{table}

The description in Table~\ref{table1}
is not very precise and needs some comments. Let us first note that the stable distributions in Table~\ref{table1} are generally
not symmetric and in some cases they are supported on $\R_+ := (0, \infty)$. The terminology `intermediate Poisson'
(borrowed from \cite{pili16,lei2018b}) refers to a certain infinitely divisible distribution written as an integral w.r.t.\ a Poisson random measure. The sum `$(\alpha\beta)$-stable + $\alpha$-stable' in Table~\ref{table1} indicates the convolution
of two distributions with different stability indices (a rather unusual 
result in limit theorems of the probability theory).

Intuitively, the results in Table~\ref{table1} 
can be explained by discussing the results of Theorem~\ref{thm1}  which deals with the iterated
limits of suitably normalized $S_{N,n}$ in \eqref{aggreSum} when first $N \to \infty $ and then $n \to \infty $,  or vice versa.
The iterated limits
are generally simpler to derive, and the joint limits in Theorem~\ref{thm2} can be regarded as some kind of `interpolation' between the
former limits. These limits are generally different in different parameter regions 
leading to three
parameter
regions: (i) $0<\beta <\alpha$, (ii) $0<\alpha<\beta <1$, and (iii) $\beta>\max(\alpha,1)$ in Theorem~\ref{thm2}.

First, let us note that, for $\beta > \max(\alpha, 1)$, our
all limits are relatively simple and coincide since
$S_{N,n}(1)$ behaves as a sum $\kappa^{1/\alpha}_\alpha \sum_{i=1}^N \sum_{t=1}^{n} \vep_i(t) $ of i.i.d.\ r.v.s in the domain of attraction of an $\alpha$-stable distribution with $\kappa_\alpha := \E(1-a)^{-\alpha} < \infty$; see the proof of Theorem~\ref{thm2}(iii). Hence,
we can turn our attention to  the parameter region $ 0< \beta < \max(\alpha, 1)$, where the iterated limits depend on the order and so the joint limits depend on the mutual rate of $N, n \to \infty$.  Let us note that in the region (i) $0<\beta<\alpha$ the results
of Theorems \ref{thm1} and \ref{thm2} naturally extend those of \cite{pili14} from $\alpha = 2$ to $0<\alpha < 2 $,
whereas in the parameter region (ii) $0< \alpha<\beta<1$ (which does not occur in \cite{pili14})
they are less predictable and somewhat surprising.

The iterated limits $\lim_{n \to \infty} \lim_{N \to \infty} $ (relations  \eqref{it:i}, \eqref{it:i0} of Theorem~\ref{thm1}) essentially follow from
\cite{pupl10} since they reduce to the $\alpha$-stable partial sums limit $\Lambda_{\alpha, \beta}$ 
of $\bar X$ in \eqref{barX} for $1< \beta < \alpha$, while for $\beta < 1$, the limit aggregate $\bar X$ is a random $(\alpha \beta)$-stable constant $V_{\alpha, \beta} $,
see \cite[Proposition~2.3]{pupl10} and the proof of \eqref{it:i0} of Theorem~\ref{thm1}.
(However, the case $1=\beta < \alpha$ is more delicate and requires a separate treatment, see the
proof of \eqref{it}.)
These observations may explain the two first lines in Table~\ref{table1}.
The third line in Table~\ref{table1}
may be explained by the iterated limit  $\lim_{N \to \infty} \lim_{n \to \infty} $ in \eqref{it:ii}, which in turn
relies on the (conditional) $\alpha$-stable partial sums limit as $n \to \infty$ in \eqref{S1conv} of the AR(1) process \eqref{AR} for {\it fixed} $a \in [0,1)$.  Unconditionally, the last limits
have $\beta$-tails and then \eqref{it:ii} turns out to be a sub-$\alpha$-stable process
with $\beta$-stable finite-dimensional distributions in agreement with the third line of Table~\ref{table1}.


Obviously, the iterated limits are not useful to explain the fourth line in Table~\ref{table1} which is part
of Theorem~\ref{thm2}(i) and one of the main results of this paper.
The intermediate (Poisson) process ${\cal Z}_{\alpha,\beta} = \{{\cal Z}_{\alpha, \beta}(\tau), \, \tau \ge 0 \} $ is defined
in \eqref{calZfd} and discussed in Section~\ref{sec3}. There,
we give its integral representation w.r.t.\ a Poisson random measure on the product space $\R_+ \times D(\R)$,
where $D(\R)$ is the Skorohod space of cadlag functions on $\R$,
and study its properties. We show that ${\cal Z}_{\alpha,\beta}$ plays
a role of a bridge between the limiting processes in the extreme cases $\mu = \infty$ and $\mu = 0$ of Theorem~\ref{thm2}(i),
because it is asymptotically locally and globally self-similar with these processes being its tangent processes; see Proposition~\ref{prop1}(v).

Finally, let us turn our attention to lines 5-7 of Table~\ref{table1} (parameter region $0< \alpha < \beta < 1$), which may be described as the
very strong dependence ($\beta < 1$) and even stronger variability ($\alpha < \beta$) in the RCAR(1) model \eqref{AR}.
This `regime' is a new  one since it could not happen in \cite{pili14} where $\alpha = 2$.
The results are part of Theorem~\ref{thm2}(ii).
We see from the iterated limits in \eqref{it:i0} and  \eqref{it:ii1} that the joint limit `chooses' between
two extreme behaviors: the $(\alpha \beta)$-stable random line $\{ V_{\alpha \beta}\, \tau, \, \tau \ge 0 \}$ with `infinite memory of increments',
and the $\alpha$-stable L\'evy process  $\{\kappa^{1/\alpha}_\alpha \zeta_\alpha (\tau), \, \tau \ge 0\}$ with `zero memory of increments'.
The `winner' of this `competition of limit behaviors' is determined by equating respective normalizations:
$ n N^{1/(\alpha \beta)} = (Nn)^{1/\alpha} $ leads to $N = n^{\gamma \beta} $ with $\gamma $ as in Table~\ref{table1},
which agrees with Table~\ref{table1} and Theorem~\ref{thm2}(ii).  Needless to say, the above argument is heuristic,
the proof of Theorem~\ref{thm2} is more involved and does not follow from Theorem~\ref{thm1}.



The proofs in the present paper (as well as \cite{pili14,lei2018b} and some other related work) clearly profit
from the detailed structure of the pre-limit AR(1) process, raising the question of their robustness in a more
general context. Remark \ref{rem1} discusses possible extensions to higher order RCAR models which seem feasible but
technically not easy.
We also note that our results 
can be put in a general framework of 
limit theorems for spatio-temporal
models with LRD. See the doctoral dissertation \cite{pili17}. In particular, they are related to the studies of 
the accumulated workload in network traffic under LRD,
as the time scale $n$ and the number $N$ of independent sources simultaneously increase, possibly at a different rate.
See \cite{taqq97, miko02, gaig03, gaig06, kaj08, domb11}. See also \cite{pili14} for a comparison between the joint temporal and contemporaneous aggregation of RCAR(1) processes and that of network traffic models with finite variance and the corresponding limit processes.
Interestingly, the intermediate limit of the accumulated workload process has also an integral though different
representation w.r.t.\ a certain Poisson random measure and can be regarded as a `bridge' between limit processes arising in the other two scaling regimes.
We note that joint aggregation of some network traffic models with infinite variance and LRD was studied in Levy and Taqqu \cite{levy00}, Pipiras et al.\ \cite{pipi04},
Kaj and Taqqu \cite{kaj08}.

{\it Notation.}  In what follows, $C$ stands for a positive constant whose precise value is unimportant and may change from line to line. We denote by $=_{\rm d}$, $\to_{\rm d}$ the equality in distribution and convergence in distribution, respectively. We also write $\to_{\rm fdd}$ and $\operatorname{(fdd)} \lim$  for the weak convergence and limit of finite-dimensional distributions. 

\section{Main results}\label{sec2}

\subsection{Assumptions}

\begin{definition}
Let $0 < \alpha \le 2$. Write $\vep \in {\cal D}(\alpha)$ if the distribution of an r.v.\ $\vep$ satisfies the following conditions:
\begin{itemize}
    \item for $\alpha=2$, $\E\vep^2<\infty$;
    \item  for $0 < \alpha < 2$, there exist some finite constants $c_1,c_2 \ge 0$ such that
    \begin{equation*}
        \lim_{x\to\infty}x^{\alpha}\P(\vep>x)=c_1,\quad \lim_{x\to-\infty}|x|^{\alpha}\P(\vep \le x)=c_2,\quad c_1+c_2>0;
    \end{equation*}
    \item in addition to the above, $\E \vep = 0$  for $1 < \alpha \le 2$,  and, for $\alpha =1$, the distribution of
    $\vep$ is symmetric.
\end{itemize}
\end{definition}
\begin{remark}
Assumption $\vep \in {\cal D}(\alpha)$ implies that $\vep$ belongs to the domain of normal attraction of an $\alpha$-stable distribution. That is, for a sequence $\{\vep(t), \, t = 1, 2, \dots \}$
of independent copies of $\vep$,
\begin{eqnarray} \label{convLevy}
n^{- 1/\alpha}\sum_{t=1}^{[n\tau]}\vep(t) &\to_{\rm fdd}&  \zeta_\alpha(\tau),
\end{eqnarray}
where $\zeta_\alpha = \{\zeta_\alpha(\tau), \, \tau \ge 0 \} $ is an $\alpha$-stable L\'evy process having characteristic function (see \cite[pages 574--581]{fell71})
\begin{align}
    &\E \e^{{\i}\theta \zeta_\alpha(\tau)} 
    =\e^{- \tau |\theta|^{\alpha}\omega(\theta)},\quad \theta\in \R,  \quad \text{with}   \label{e:charFOfalphaStable1}\\
    &\omega(\theta):=\begin{cases}
        \frac{\Gamma(2-\alpha)}{1-\alpha} ( (c_1+c_2)\cos (\frac{\alpha \pi}{2} )-{\i}(c_1-c_2) \operatorname{sign}(\theta) \sin (\frac{\alpha \pi}{2} ) ),  &\alpha\ne 1,2, \\
        (c_1+c_2)\frac{\pi}{2}, &\alpha=1,\\
        \frac{1}{2} \E \vep^2,    &\alpha=2. 
    \end{cases}\label{omega}
\end{align}
Furthermore, assumption $\vep \in {\cal D}(\alpha)$ implies $\E |\vep|^p < \infty$ for any $0 < p < \alpha$.
\end{remark}

In what follows, we assume that $\{\vep (t), \, t \in \Z \}$ in \eqref{AR} are independent copies of $\vep \in {\cal D}(\alpha)$ for some $0 < \alpha \le 2$.
Moreover, we assume that $a$ is an absolutely continuous r.v.\ having density $\phi$ which is supported on $[0,1)$ and admits the representation
\begin{equation}\label{atail2}
\phi(u) = \psi(u)(1-u)^{\beta-1},  \quad u\in [0,1),
\end{equation}
for some $\beta >0$ and some integrable function $\psi (u)$, $u \in [0,1)$, having finite limit $\lim\limits_{u\uparrow 1}\psi(u) =:\psi_1>0$.
The same assumption is made in \cite{pili14, pupl10} and other related works.
Then there exists a unique stationary
solution of \eqref{AR} given by
\begin{equation}\label{AR2}
X(t) = \sum_{s \le t} a^{t-s} \vep (s), \quad t \in \Z,
\end{equation}
where the series on the r.h.s.\ of \eqref{AR2}
converges in $L^p$ for $0 < p < \alpha \min(\beta, 1)$ if $0 < \alpha < 2$;  and for $0 < p \le 2$ such that $p < 2\beta$ if $\alpha = 2$.
For almost every $a \in [0,1)$, the series on the r.h.s.\ of \eqref{AR2} converges conditionally a.s. and conditionally in $L^p$ for $0 < p < \alpha$ if $0 < \alpha < 2$;  and for $0 < p \le 2$ if $\alpha = 2$.
See \cite{pupl09} for details.

\subsection{Limiting processes}

For $1 < \beta < \alpha \le 2$, we define a stochastic process $\Lambda_{\alpha,\beta} = \{ \Lambda_{\alpha,\beta}
(\tau),\, \tau \ge 0 \}$  by
\begin{align}\label{Lambda}
\Lambda_{\alpha,\beta} (\tau) &:= \int_{\R_+ \times \R} f_\tau (x,s) M (\d x, \d s), \quad \text{where}\\
f_\tau (x,s) &:= \int_0^\tau \e^{-x (t-s)} \1 (s \le t) \d t, \quad \tau \ge 0, \ x > 0, \ s \in \R,\nonumber
\end{align}
and $M (\d x, \d s)$ is an $\alpha$-stable random measure on $\R_+ \times \R$ with a control measure
$m (\d x, \d s) := \psi_1 x^{\beta-1} \d x \d s $ such that
$\E \e^{ \i \theta M (B) } =  \e^{-|\theta|^\alpha \omega(\theta) m (B)}$, $\theta \in \R$, for every Borel set $B \subset \R_+ \times {\R}$ with $m (B) < \infty$ and $\omega$, $\psi_1 $ given in \eqref{omega}, \eqref{atail2}.
The process $\Lambda_{\alpha,\beta} $ was introduced in  \cite{pupl10}. It
is $\alpha$-stable, $H$-self-similar with $H = 1 - (\beta-1)/\alpha \in (1/\alpha, 1)$,
has stationary dependent increments,
and is related to the integrated superposition of Ornstein-Uhlenbeck processes discussed in Barndorff-Nielsen  \cite{barn01}. See also \cite{grah18}.
The joint characteristic function of  $\Lambda_{\alpha,\beta}$ is given by
\begin{align} \label{Zriba}
\E \exp\Big\{{\i}\sum_{j=1}^d \theta_j \Lambda_{\alpha,\beta} (\tau_j) \Big\}
&=\exp \Big\{ - \int_{\R_+ \times \R}
\big|\sum_{j=1}^d \theta_j f_{\tau_j} (x,s) \big|^\alpha \omega\big(\sum_{j=1}^d \theta_j f_{\tau_j} (x,s) \big) m ({\d}x, {\d}s)  \Big\},
\end{align}
for $\theta_j \in \R$, $\tau_j \ge 0$, $j=1, \dots, d$, $d\in \N$.
For $\alpha = 2$,  $\Lambda_{2,\beta}$ is a Gaussian process with
mean zero and the autocovariance function
\begin{equation}\label{Lambdacov}
\E \Lambda_{2,\beta} (\tau_1) \Lambda_{2,\beta} (\tau_2) = \E \vep^2
\int_{\R_+ \times \R} f_{\tau_1} (x,s) f_{\tau_2} (x,s) m (\d x, \d s) =
\frac{\sigma^2_\beta}{2} (\tau_1^{2H} + \tau_2^{2H} - |\tau_1-\tau_2|^{2H}), \quad \tau_1, \tau_2 \ge 0.
\end{equation}
It follows that $\Lambda_{2,\beta}$ is a fractional Brownian motion with
Hurst index $H = (3-\beta)/2$ and variance $\E \Lambda^2_{2,\beta} (1) =:
\sigma_\beta^2 = \psi_1 \Gamma (\beta-1) \E \vep^2 /(2-\beta)(3-\beta)$.

Next, for $0< \lambda < 1$, $0 < \alpha \le 2$, $\beta >0$, let $W_{\lambda,\alpha, \beta} >0$ be
a $\lambda$-stable r.v.\ with
Laplace transform
\begin{align}\label{Wdef}
\E \e^{  - \theta W_{\lambda,\alpha, \beta} }&=\e^{ - \kappa_{\lambda,\alpha, \beta} \theta^{\lambda}},  \quad  \theta \ge 0,  \quad \text{where} \\
\kappa_{\lambda,\alpha, \beta}&:=\psi_1\int_0^\infty (1- \exp \{ - (\lambda \alpha/\beta)^{-1} x^{-\beta/ \lambda} \}) x^{\beta-1} \d x
= \frac{\psi_1 \Gamma(1-\lambda)}{(\lambda \alpha/\beta)^\lambda \beta} > 0.  \nn
\end{align}
It is well-known (see, e.g., \cite[Theorem~2.6.1]{zolo86}) that the Laplace transform in \eqref{Wdef} extends to all complex numbers $\theta \in \C $ with ${\rm Re} (\theta) \ge 0$.
Assume that $W_{\lambda,\alpha,\beta}$ is independent of the L\'evy process  $\zeta_\alpha $ in \eqref{convLevy}.
Define
\begin{align}\label{VWdef}
V_{\alpha,\beta}&:= W^{1/\alpha}_{\beta,\alpha,\beta} \, \zeta_\alpha (1), \qquad 0 < \beta < 1, \\
{\cal W}_{\alpha,\beta}(\tau)&:= W^{1/\alpha}_{\beta/\alpha,\alpha,\beta} \, \zeta_\alpha (\tau), \quad \tau \ge 0,  \quad 0< \beta < \alpha. \nn
\end{align}
Then, using \eqref{Wdef}, we obtain for any  $\theta \in \R$,
\begin{align}  \label{VWchf}
\E \e^{\i \theta V_{\alpha,\beta}}
&=\E \e^{- |\theta|^\alpha \omega(\theta) W_{\beta, \alpha, \beta}}
=  \exp \{ - \kappa_{\beta,\alpha, \beta} |\theta|^{\alpha \beta} (\omega(\theta))^\beta \},  \quad   0< \beta < 1, \\
\E \e^{\i \theta {\cal  W}_{\alpha,\beta}(\tau)}
&=\E \e^{-\tau |\theta|^\alpha \omega(\theta) W_{\beta/\alpha, \alpha, \beta}}
=  \exp \{ - \kappa_{\beta/\alpha,\alpha, \beta} \tau^{\beta/\alpha} |\theta|^{\beta} (\omega(\theta))^{\beta/\alpha} \},  \quad 0< \beta < \alpha, \nn
\end{align}
where
\begin{equation}\label{kappas}
\kappa_{\beta,\alpha, \beta} = \frac{\psi_1}{\alpha^\beta \beta}\Gamma (1- \beta), \qquad
\kappa_{\beta/\alpha, \alpha, \beta} = \frac{\psi_1}{\beta} \Gamma (1 - \frac{\beta}{\alpha}).
\end{equation}
From \eqref{VWchf}, it follows that
r.v.s  $V_{\alpha,\beta} $ and ${\cal W}_{\alpha,\beta}(\tau)$  are stable with respective stability indices
$\alpha \beta  < \alpha$ and $\beta < \alpha$. In a similar way, it follows that
${\cal W}_{\alpha,\beta} = \{ {\cal W}_{\alpha,\beta}(\tau), \, \tau \ge 0\}$ has $\beta$-stable finite dimensional distributions. Following
\cite[Section~3.8]{samo1994}, we call the stochastic processes in \eqref{VWdef}  {\it sub-stable}.
We note that
${\cal W}_{\alpha,\beta}$ enjoys the stationary increment and $H$-self-similarity with $H = 1/\alpha $ properties which
it inherits from the L\'evy process $\zeta_\alpha$. For $\beta =1 < \alpha \le 2$, introduce also an $\alpha$-stable r.v.\
$V_{\alpha,1} $ with a characteristic function
\begin{equation}\label{V1chf}
\E \e^{\i \theta V_{\alpha,1}} = \e^{-(\psi_1/\alpha)|\theta|^\alpha \omega(\theta)}, \quad \theta  \in \R.
\end{equation} 
Since $\lim_{\beta \uparrow 1} (1-\beta)\Gamma(1-\beta) =1,   $ it follows that $(1-\beta)^{1/(\alpha \beta)} V_{\alpha,\beta}  \to_{\rm d} V_{\alpha,1}$
as $\beta \uparrow 1$. The above discontinuity of the distribution of $V_{\alpha,\beta}$ at $\beta =1 $ can be explained by the additional
logarithmic normalization in \eqref{it} and \eqref{i00}. 

Finally, for $0 < \beta < \alpha \le 2$,
we define a random process ${\cal Z}_{\alpha, \beta} = \{ {\cal Z}_{\alpha,\beta} (\tau), \, \tau \ge 0 \}$ through its
joint characteristic function:
\begin{equation} \label{calZfd}
\E \exp \Big\{ \i \sum_{j=1}^d \theta_j {\cal Z}_{\alpha,\beta} (\tau_j)  \Big\}
= \exp \Big\{ \psi_1  \int_{\R_+}  \Big(\exp \Big\{- \int_{\R} \big| \sum_{j=1}^d \theta_j f_{\tau_j} (x,s) \big|^\alpha
\omega \big( \sum_{j=1}^d \theta_j f_{\tau_j} (x,s) \big) \d s \Big\} -1 \Big) x^{\beta-1} \d x  \Big\},
\end{equation}
where $\theta_j \in \R$, $\tau_j \ge 0$, $j=1, \dots, d$, $d \in \N$ and $f_\tau (x,s)$ is given in \eqref{Lambda}. 
A stochastic integral representation and various properties of ${\cal Z}_{\alpha,\beta}$ are discussed in Section~\ref{sec3}.

\subsection{Limit theorems}


In Theorems~\ref{thm1} and \ref{thm2}, the process $S_{N,n} = \{ S_{N,n} (\tau), \, \tau \ge 0 \}$
is the joint aggregate in \eqref{aggreSum} of independent copies of the RCAR(1) process $X = \{ X(t), \, t \in \Z \}$ in \eqref{AR2} satisfying the above-stated assumptions for some $0 < \alpha \le 2$, some $\beta > 0$ and some $\psi_1 > 0$.
Theorem~\ref{thm1} discusses iterated limits when $N \to \infty $ followed by $n \to \infty $
(limits \eqref{it:i}, \eqref{it:i0}),
 or vice versa (limits \eqref{it:ii}, \eqref{it:ii1}). Let $\kappa_\alpha := \E (1-a)^{-\alpha} $ when
the last expectation exists.

\begin{theorem} \label{thm1}
Let $0 < \beta <  \max(\alpha,1)$. Then 
\begin{align}\label{it:i}
\operatorname{(fdd)} \lim_{n \to \infty}  \lim_{N \to \infty} n^{-1+(\beta-1)/\alpha} N^{-1/\alpha}  S_{N,n}(\tau)
&= \Lambda_{\alpha,\beta}(\tau), \quad 
1 < \beta < \alpha, \\
\label{it:i0}
\operatorname{(fdd)} \lim_{n \to \infty}  \lim_{N \to \infty} n^{-1} N^{-1/(\alpha \beta)}  S_{N,n}(\tau)
&= V_{\alpha,\beta} \,\tau, \hskip.8cm 
0 < \beta < 1, \\ \label{it}
\operatorname{(fdd)}\lim_{n\to \infty} \lim_{N\to\infty} n^{-1} (N \log N)^{-1/\alpha} S_{N,n} (\tau)
&=  V_{\alpha,1} \tau,
\hskip.8cm 1 = \beta < \alpha, 
\end{align}
and
\begin{align}
\label{it:ii}
\operatorname{(fdd)} \lim_{N \to \infty}  \lim_{n \to \infty} N^{-1/\beta} n^{-1/\alpha} S_{N,n}(\tau)
&= {\cal W}_{\alpha,\beta}(\tau), \qquad \ 
0 < \beta < \alpha,\\
\label{it:ii1}
\operatorname{(fdd)} \lim_{N \to \infty}  \lim_{n \to \infty} N^{-1/\alpha} n^{-1/\alpha} S_{N,n}(\tau)
&= \kappa^{1/\alpha}_\alpha \zeta_\alpha (\tau), \qquad 
 0 < \alpha < \beta < 1.
\end{align}

\end{theorem}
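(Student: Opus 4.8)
\textbf{Proof plan for Theorem~\ref{thm1}.}
The plan is to treat the four iterated limits by fixing one index, applying a known one-parameter limit theorem, and then letting the other index tend to infinity. For \eqref{it:i}, \eqref{it:i0}, the inner limit $N \to \infty$ is handled by \cite{pupl10}: for fixed $n$, the normalized sum $N^{-1/\alpha} S_{N,n}(\tau) = N^{-1/\alpha} \sum_{i=1}^N \sum_{t=1}^{[n\tau]} X_i(t)$ converges (fdd) to $\sum_{t=1}^{[n\tau]} \bar X(t)$, the partial sums of the $\alpha$-stable mixed moving average $\bar X$ in \eqref{barX}. So the inner limit reduces $S_{N,n}$ to partial sums of $\bar X$. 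Then the outer limit $n \to \infty$ is exactly the distributional-LRD scaling limit of $\bar X$: by \cite{pupl10}, for $1 < \beta < \alpha$ one has $n^{-H} \sum_{t=1}^{[n\tau]} \bar X(t) \to_{\rm fdd} \Lambda_{\alpha,\beta}(\tau)$ with $H = 1 - (\beta-1)/\alpha$, giving \eqref{it:i}; for $0 < \beta < 1$, $\bar X$ is (a.s.\ constant in $t$, equal to) the $(\alpha\beta)$-stable variable $V_{\alpha,\beta}$ by \cite[Proposition~2.3]{pupl10}, so $\sum_{t=1}^{[n\tau]} \bar X(t) = [n\tau] \bar X$ and $n^{-1}\sum_{t=1}^{[n\tau]} \bar X(t) \to \tau V_{\alpha,\beta}$, giving \eqref{it:i0}. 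I would be careful to verify that the fdd limit in $n$ of the fixed-$n$ process is legitimate — i.e.\ that the convergence in \cite{pupl10} is genuinely fdd and that iterating is valid, which it is since at each stage we take an honest weak limit.

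For \eqref{it:ii}, \eqref{it:ii1}, the order is reversed, so the inner limit is $n \to \infty$ with $N$ fixed. Here I would first establish the conditional (given the coefficients $a_1, \dots, a_N$) partial-sum limit for each individual AR(1): for fixed $a \in [0,1)$, using \eqref{AR2} and $\sum_{t=1}^{[n\tau]} X(t) = \sum_{s} \big(\sum_{t=1}^{[n\tau]} a^{t-s}\1(s \le t)\big)\vep(s)$, a telescoping/Abel-summation argument shows $n^{-1/\alpha}\sum_{t=1}^{[n\tau]} X(t) \to_{\rm fdd} (1-a)^{-1}\zeta_\alpha(\tau)$ as $n \to \infty$ — this is the limit \eqref{S1conv} referenced in the introduction; the factor $(1-a)^{-1}$ comes from $\sum_{k\ge 0} a^k$. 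Summing over the $N$ independent copies (and using independence of the $\zeta$'s) gives, for fixed $N$, $n^{-1/\alpha}S_{N,n}(\tau) \to_{\rm fdd} \sum_{i=1}^N (1-a_i)^{-1}\zeta_{\alpha,i}(\tau)$. Now pass $N \to \infty$: conditionally on $(a_i)$, the sum $\sum_{i=1}^N (1-a_i)^{-1}\zeta_{\alpha,i}(\tau)$ is $\alpha$-stable with conditional scale $\big(\sum_{i=1}^N (1-a_i)^{-\alpha}\big)^{1/\alpha}$, so $N^{-1/\beta}n^{-1/\alpha}S_{N,n}(\tau)$ has conditional scale $\big(N^{-\alpha/\beta}\sum_{i=1}^N (1-a_i)^{-\alpha}\big)^{1/\alpha}$. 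Since $\P((1-a)^{-\alpha} > x) \sim \operatorname{const} x^{-\beta/\alpha}$ by \eqref{atail2}, the r.v.s $(1-a_i)^{-\alpha}$ are in the domain of attraction of a $(\beta/\alpha)$-stable law when $\beta < \alpha$, so $N^{-\alpha/\beta}\sum_{i=1}^N (1-a_i)^{-\alpha} \to_{\rm d} W_{\beta/\alpha,\alpha,\beta}$ (a positive $(\beta/\alpha)$-stable r.v., matching \eqref{Wdef}); this yields the sub-stable process ${\cal W}_{\alpha,\beta}(\tau) = W^{1/\alpha}_{\beta/\alpha,\alpha,\beta}\zeta_\alpha(\tau)$ of \eqref{VWdef} and proves \eqref{it:ii}. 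When $\alpha < \beta < 1$, instead $\E(1-a)^{-\alpha} = \kappa_\alpha < \infty$, so by the law of large numbers $N^{-1}\sum_{i=1}^N (1-a_i)^{-\alpha} \to \kappa_\alpha$ a.s., the conditional scale converges to the deterministic $\kappa_\alpha^{1/\alpha}$, and one gets the genuine $\alpha$-stable Lévy process $\kappa_\alpha^{1/\alpha}\zeta_\alpha(\tau)$, which is \eqref{it:ii1}. (To transfer from conditional to unconditional convergence I would either verify convergence of conditional characteristic functions pointwise together with dominated convergence, or cite a standard mixing/stable-mixture argument; one must check joint-in-$\tau$ convergence, not just marginal.)

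For the boundary case $1 = \beta < \alpha$ in \eqref{it}, the same two-step scheme applies but the inner limit is more delicate: when $\beta = 1$ the control measure $\psi_1 x^{\beta-1}\d x = \psi_1 \d x$ on $[0,1)$ makes $\bar X$ in \eqref{barX} have $\alpha$-stable marginals but the partial-sum normalization picks up a logarithmic correction, exactly as $\kappa_{\beta,\alpha,\beta} = \psi_1\Gamma(1-\beta)/(\alpha^\beta\beta)$ blows up as $\beta \uparrow 1$. Concretely, $n^{-1}(N\log N)^{-1/\alpha}$ replaces $n^{-1}N^{-1/(\alpha\beta)}$: one shows $n^{-1}(\log N)^{-1/\alpha}N^{-1/\alpha}\sum_{i=1}^N\sum_{t=1}^{[n\tau]} X_i(t) \to \tau V_{\alpha,1}$, where the $\log N$ arises because $\sum_{i=1}^N (1-a_i)^{-\alpha}$ with $\beta = 1$ and density $\sim\psi_1$ at $1$ has $\E[(1-a)^{-\alpha}\wedge N] \sim$ const$\cdot\log N$ (a boundary-of-domain-of-attraction phenomenon), which after normalization concentrates and produces the deterministic scale $\psi_1/\alpha$ of \eqref{V1chf}. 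I expect this logarithmic boundary case to be the main obstacle: one cannot simply quote \cite{pupl10} (which excludes $\beta = 1$), and must carefully track the truncated-moment asymptotics and verify that the contribution of large $(1-a_i)^{-\alpha}$ is asymptotically negligible after the $\log N$-rescaling; the cleanest route is via characteristic functions, estimating $\E\exp\{\i\theta n^{-1}(N\log N)^{-1/\alpha}S_{N,n}(\tau)\}$ directly, expanding to second order and using $\vep \in {\cal D}(\alpha)$ together with the $\beta = 1$ tail of $(1-a)^{-1}$. The remaining cases are routine once the conditional AR(1) partial-sum limit \eqref{S1conv} and the tail behavior of $(1-a)^{-\alpha}$ under \eqref{atail2} are in hand.
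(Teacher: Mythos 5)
Your plan follows the paper's proof essentially step for step: \eqref{it:i} and \eqref{it:i0} by citing the aggregation and partial-sum limits of Puplinskait\.e--Surgailis, \eqref{it:ii} and \eqref{it:ii1} by first establishing the conditional limit \eqref{S1conv} and then handling the random scales $(1-a_i)^{-\alpha}$ via a $(\beta/\alpha)$-stable limit theorem (resp.\ the law of large numbers for $\E(1-a)^{-\alpha}<\infty$), and \eqref{it} by a direct characteristic-function computation with a $\delta$-splitting of the integral near $a=1$ --- all exactly as in Section~4. One correction to your heuristic for the $\beta=1$ case: the quantity with a logarithmically slowly varying truncated moment is $(1-a)^{-1}$, not $(1-a)^{-\alpha}$; the conditional $\alpha$-scale of $\sum_t\theta_t X(t)$ behaves like $\operatorname{const}\cdot(1-a)^{-1}$ as $a\uparrow 1$, and since $\P((1-a)^{-1}>x)\sim \psi_1 x^{-1}$ one gets $\E[(1-a)^{-1}\wedge M]\sim\psi_1\log M$ and $(N\log N)^{-1}\sum_{i=1}^N(1-a_i)^{-1}\to_{\rm p}\psi_1$, which is what degenerates the mixture to the deterministic scale $\psi_1/\alpha$ in \eqref{V1chf}, whereas $\E[(1-a)^{-\alpha}\wedge N]\sim C N^{1-1/\alpha}$ for $\alpha>1$, so the formula as you wrote it would not produce the $\log N$. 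This is only a slip in an aside (you correctly invoke the $\beta=1$ tail of $(1-a)^{-1}$ at the end of that paragraph), but carrying the wrong exponent into the actual computation would derail the normalization.
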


\medskip

The following Theorem~\ref{thm2} discusses joint limits of appropriately normalized $S_{N,n}$ under simultaneous increase of $N, n$.
As noted in the Introduction, these limits depend on the mutual  increase rate of $N, n$ and
the parameters $\alpha, \beta  $. In \eqref{iii1} below,  $V_{\alpha,\beta}$ and  $\zeta_\alpha $ are mutually independent.

\medskip

\begin{theorem} \label{thm2}
	
\noi (i)  Let $0 < \beta < \alpha$. Let $N,n \to \infty$ so that
\begin{equation}
\frac{N^{1/\beta}}{n} \to \mu \in [0,\infty].
\end{equation}
Then:
\begin{eqnarray}
	N^{-1/\alpha} n^{-1 + (\beta-1)/\alpha} S_{N,n}(\tau)&\to_{\rm fdd}&
\Lambda_{\alpha,\beta} (\tau), \hskip1.5cm \mu = \infty, \  1 < \beta < \alpha, \label{i} \\
	N^{-1/(\alpha \beta)} n^{-1} S_{N,n}(\tau)&\to_{\rm fdd}&V_{\alpha,\beta} \, \tau \hskip2cm
\mu = \infty, \ 0 < \beta < \min(\alpha,1), \label{i0} \\
(N \log (N/n))^{-1/\alpha} n^{-1} S_{N,n} (\tau) & \to_{\rm fdd}&  V_{\alpha,1} \tau, \hskip2cm
\mu = \infty, \  1 = \beta < \alpha,  \label{i00}   \\
N^{-1/\beta} n^{-1/\alpha} S_{N,n}(\tau)&\to_{\rm fdd}&{\cal W}_{\alpha,\beta}(\tau), \hskip1.5cm \mu = 0, \  0 < \beta < \alpha, \label{ii} \\
	N^{-1/\beta} n^{-1/\alpha} S_{N,n}(\tau)&\to_{\rm fdd}&\mu^{1/\alpha}
	{\cal Z}_{\alpha,\beta}(\tau/\mu), \hskip.5cm \mu \in (0,\infty), \ 0 < \beta < \alpha. \label{iii}
	\end{eqnarray}

\noi (ii) Let $0 < \alpha < \beta < 1$.  Let $N,n \to \infty$ so that
\begin{equation}
\frac{ N^{1/(\gamma\beta)}}{n} \to \mu \in [0,\infty], \quad \text{where } \gamma := \frac{1-\alpha}{1-\beta} > 1.
 \end{equation}
Then:
	\begin{eqnarray}
	N^{-1/(\alpha \beta)} n^{-1} S_{N,n} (\tau) &\to_{\rm fdd}&V_{\alpha,\beta} \, \tau,  \hskip3.9cm   \mu = \infty,
	\label{i1} \\
	(Nn)^{-1/\alpha} S_{N,n} (\tau) &\to_{\rm fdd}&\kappa^{1/\alpha}_\alpha \zeta_\alpha (\tau),  \hskip3.2cm \mu = 0,\label{ii1} \\
	(Nn)^{-1/\alpha} S_{N,n} (\tau) &\to_{\rm fdd}
&\mu^{(1/\alpha)-1} V_{\alpha,\beta} \,\tau + \kappa^{1/\alpha}_\alpha \zeta_\alpha(\tau), \quad  \mu \in (0,\infty).	\label{iii1}
	\end{eqnarray}

\noi (iii)  Let $\beta > \max(\alpha,1)$. Then, as $N, n \to \infty$ in arbitrary way,
	\begin{eqnarray}\label{lim:levy}
	(Nn)^{-1/\alpha} S_{N,n} (\tau) &\to_{\rm fdd}& \kappa^{1/\alpha}_\alpha \zeta_\alpha (\tau).
	\end{eqnarray}

\end{theorem}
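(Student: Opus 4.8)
\medskip\noindent\textbf{Proof proposal.}
The plan is to prove all three parts by a single device --- an exact formula for the normalized joint characteristic function of $S_{N,n}$ obtained by conditioning on the coefficients --- followed by a regime-dependent asymptotic analysis of one scalar integral against the law of $a$. Fix $d\ge1$, $0\le\tau_1<\dots<\tau_d$ and $\theta_1,\dots,\theta_d\in\R$; put $m:=[n\tau_d]$, $\Theta_k:=\theta_k+\dots+\theta_d$ and $\Theta:=\sum_{j=1}^d\theta_j\tau_j$. Inserting \eqref{AR2} and interchanging the absolutely convergent sums gives $\sum_{j=1}^d\theta_j S_{N,n}(\tau_j)=\sum_{i=1}^N\sum_{s\le m}\vep_i(s)\,G_n(a_i,s)$, where $G_n(a,s):=\sum_{j=1}^d\theta_j\,g_{n,\tau_j}(a,s)$ and $g_{n,\tau}(a,s):=\sum_{t=\max(1,s)}^{[n\tau]}a^{t-s}$ for $s\le[n\tau]$ (zero otherwise). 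Conditioning on the i.i.d.\ copies $(a_i)$ and using independence of the innovation arrays, for any normalization $A=A_{N,n}>0$,
\[
\E\exp\Big\{\tfrac{\i}{A}\sum_{j=1}^d\theta_j S_{N,n}(\tau_j)\Big\}=(1+\delta_{N,n})^N,\qquad \delta_{N,n}:=\E_a\Big[\prod_{s\le m}\varphi_\vep\big(A^{-1}G_n(a,s)\big)\Big]-1,
\]
with $\varphi_\vep(u):=\E\,\e^{\i u\vep}$ and $\E_a$ integrating over $a$ only. Everything then reduces to showing that $N\delta_{N,n}$ converges to the cumulant of the claimed limit, read off from \eqref{Zriba}, \eqref{VWchf}, \eqref{V1chf}, \eqref{calZfd}, \eqref{convLevy}. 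The workhorse is the expansion $\varphi_\vep(u)=1-|u|^\alpha\omega(u)(1+o(1))$ as $u\to0$ (valid for $\vep\in\mathcal D(\alpha)$ with the prescribed centering; for $\alpha=1$ symmetry makes $\varphi_\vep$ real and $\le1$) together with the uniform bound $|\varphi_\vep(u)-1|\le C(|u|^\alpha\wedge|u|)$ (or $C\min(1,|u|^\alpha)$ for $\alpha\le1$); these yield $\log\prod_{s\le m}\varphi_\vep(A^{-1}G_n(a,s))=-A^{-\alpha}T_n(a)(1+o(1))$ with $T_n(a):=\sum_{s\le m}|G_n(a,s)|^\alpha\omega(G_n(a,s))$, on the set where $A^{-1}G_n(a,s)$ is uniformly small --- which, since $|g_{n,\tau}(a,s)|\le(1-a)^{-1}$, can fail only near the unit root $a=1$.

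\emph{The kernel dictionary.} If $n(1-a)\to\infty$ (in particular for $a$ in a compact subset of $[0,1)$) then $g_{n,\tau}(a,s)\to(1-a)^{-1}\1(1\le s\le[n\tau])$ up to geometric boundary corrections, so $n^{-1}T_n(a)\to(1-a)^{-\alpha}c(\boldsymbol\theta,\boldsymbol\tau)$ with $c(\boldsymbol\theta,\boldsymbol\tau):=\sum_{k=1}^d(\tau_k-\tau_{k-1})|\Theta_k|^\alpha\omega(\Theta_k)$, which is exactly the cumulant of $\sum_j\theta_j\zeta_\alpha(\tau_j)$ (independent increments, \eqref{convLevy}). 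If instead $n(1-a)\to x\in(0,\infty)$ then $n^{-1}G_n(1-x/n,[nr])\to F(x,r):=\sum_j\theta_j f_{\tau_j}(x,r)$ with $f_\tau$ the kernel of \eqref{Lambda}, hence $n^{-(\alpha+1)}T_n(1-x/n)\to\mathcal I(x):=\int_\R|F(x,r)|^\alpha\omega(F(x,r))\,\d r$. A short computation gives the two-sided behaviour $\mathcal I(x)\sim\alpha^{-1}|\Theta|^\alpha\omega(\Theta)\,x^{-1}$ as $x\downarrow0$ and $\mathcal I(x)\sim c(\boldsymbol\theta,\boldsymbol\tau)\,x^{-\alpha}$ as $x\to\infty$; the two tail constants encode the ``integrated trajectory'' and the ``L\'evy increments'' regimes and are precisely what produce the two stability indices $\alpha\beta$ and $\beta$ in the theorem. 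Under the substitution $a=1-x/n$ the law of $x$ has density $\sim\psi_1n^{-\beta}x^{\beta-1}$ on compacts (from \eqref{atail2}), so $N$ copies see intensity $\sim Nn^{-\beta}\psi_1x^{\beta-1}\,\d x$, and in each case one balances this against $A^{-\alpha}T_n(1-x/n)$.

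\emph{Assembling the cases.} (a) \emph{L\'evy limit $\kappa_\alpha^{1/\alpha}\zeta_\alpha$ --- part (iii)/\eqref{lim:levy}, and \eqref{ii1}:} take $A=(Nn)^{1/\alpha}$; since $\beta>\alpha$ we have $\kappa_\alpha=\E(1-a)^{-\alpha}<\infty$, the event $\{1-a<(Nn)^{-1/\alpha}\}$ contributes $O(N^{1-\beta/\alpha}n^{-\beta/\alpha})=o(1)$ to $N\delta_{N,n}$, and on its complement dominated convergence (dominating function $C(1-a)^{-\alpha}$) gives $N\delta_{N,n}\to-\E_a\big[(1-a)^{-\alpha}\big]c(\boldsymbol\theta,\boldsymbol\tau)=-\kappa_\alpha c(\boldsymbol\theta,\boldsymbol\tau)$ --- the cumulant of $\kappa_\alpha^{1/\alpha}\zeta_\alpha$; no relation between $N,n$ is used, which proves (iii). (b) \emph{Integrated-trajectory limits --- \eqref{i}, \eqref{i00}, \eqref{i1}:} the dominant scale is $1-a\asymp n^{-1}$ (for \eqref{i}, \eqref{i00}) or $1-a\asymp N^{-1/\beta}$ (for \eqref{i1}), in all cases $x=n(1-a)\to0$; with $A=N^{1/\alpha}n^{1-(\beta-1)/\alpha}$ one gets $A^{-\alpha}T_n(1-x/n)\sim(n^\beta/N)\mathcal I(x)\to0$ and $N\delta_{N,n}\to-\psi_1\int_0^\infty\mathcal I(x)x^{\beta-1}\d x$, finite exactly for $1<\beta<\alpha$ by the dictionary, which by \eqref{Zriba} is the cumulant of $\Lambda_{\alpha,\beta}$; for \eqref{i1} ($\beta<1$) only the small-$x$ tail survives and, using $\int_0^\infty(\e^{-c/w}-1)w^{\beta-1}\d w=-\beta^{-1}\Gamma(1-\beta)c^\beta$, one gets $N\delta_{N,n}\to-\kappa_{\beta,\alpha,\beta}|\Theta|^{\alpha\beta}\omega(\Theta)^\beta$, the cumulant of $V_{\alpha,\beta}\tau$ by \eqref{VWchf}; the borderline $\beta=1$ gives a logarithmic divergence $\int_{cn/N}^1x^{-1}\d x\sim\log(N/n)$, whence the $\log(N/n)$-normalization and the limit $V_{\alpha,1}\tau$ of \eqref{V1chf}. (c) \emph{Sub-stable limit ${\cal W}_{\alpha,\beta}$ --- \eqref{ii}:} here $1-a\asymp N^{-1/\beta}$ with $x=n(1-a)\to\infty$; using the large-$x$ tail $\mathcal I(x)\sim c(\boldsymbol\theta,\boldsymbol\tau)x^{-\alpha}$ and $\int_0^\infty(\e^{-cw^{-\alpha}}-1)w^{\beta-1}\d w=-\beta^{-1}\Gamma(1-\beta/\alpha)c^{\beta/\alpha}$ gives $N\delta_{N,n}\to-\kappa_{\beta/\alpha,\alpha,\beta}c(\boldsymbol\theta,\boldsymbol\tau)^{\beta/\alpha}$, the cumulant of ${\cal W}_{\alpha,\beta}$ by \eqref{VWchf} (the probabilistic picture being the conditional $\alpha$-stable limit \eqref{convLevy} of the AR(1) sums for fixed $a$, aggregated over the $N$ independent $\beta$-heavy factors $(1-a_i)^{-1}$). (d) \emph{Intermediate limits --- \eqref{iii}, \eqref{iii1}:} for \eqref{iii} ($\mu\in(0,\infty)$) take $A=N^{1/\beta}n^{1/\alpha}$; then $Nn^{-\beta}\to\mu^{\beta}$ and $A^{-\alpha}T_n(1-x/n)\to\mu^{-\alpha}\mathcal I(x)$ stay of order one, so $\delta_{N,n}\to0$ while $N\delta_{N,n}\to\mu^{\beta}\psi_1\int_0^\infty(\e^{-\mu^{-\alpha}\mathcal I(x)}-1)x^{\beta-1}\d x$, which the change of variables $x\mapsto\mu x$ identifies with the exponent in \eqref{calZfd} at $(\mu^{1/\alpha}\theta_j,\tau_j/\mu)$, i.e.\ the law of $\mu^{1/\alpha}{\cal Z}_{\alpha,\beta}(\cdot/\mu)$. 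For \eqref{iii1} ($0<\alpha<\beta<1$, $\mu\in(0,\infty)$) keep $A=(Nn)^{1/\alpha}$ and split $\delta_{N,n}=\E_a[\,\cdot\,;\,1-a\ \text{near}\ 0\,]+\E_a[\,\cdot\,;\ \text{bulk}\,]$: the bulk gives $-\kappa_\alpha c(\boldsymbol\theta,\boldsymbol\tau)$ as in (a), while the near-root block (scale $1-a\asymp n^{\alpha}/N$) gives $-\mu^{\beta(1-\alpha)}\kappa_{\beta,\alpha,\beta}|\Theta|^{\alpha\beta}\omega(\Theta)^\beta$ via (b); the two cumulants add up to that of $\mu^{(1/\alpha)-1}V_{\alpha,\beta}\tau+\kappa_\alpha^{1/\alpha}\zeta_\alpha$ with $V_{\alpha,\beta},\zeta_\alpha$ independent, so the additive split of $N\delta_{N,n}$ \emph{is} the convolution. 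Equivalently, in (d) the point measure $\sum_{i=1}^N\delta_{(n(1-a_i),\,n^{-1/\alpha}X_i(\lfloor n\cdot\rfloor))}$ converges to a Poisson random measure on $\R_+\times D(\R)$ and $S_{N,n}$ is a continuous functional of it (cf.\ Section~\ref{sec3}). Finally, cases \eqref{i1} and \eqref{ii1} of part (ii) are the $\mu=\infty$ and $\mu=0$ specializations of this same split, with one of the two pieces dominating.

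\emph{Main obstacle.} Two points require genuine care. First, the uniform control of $\prod_{s\le m}\varphi_\vep(A^{-1}G_n(a,s))$ when $a$ is very close to $1$, where $G_n$ is large and the small-argument expansion of $\varphi_\vep$ fails: one splits off $\{1-a<\eta_{N,n}\}$ for a suitable $\eta_{N,n}\downarrow0$, bounds its contribution to $N\delta_{N,n}$ by $N\,\P(1-a<\eta_{N,n})$ times a crude estimate, and checks this is $o(1)$ in each regime (this is exactly where $\beta>\alpha$ enters in (a),(c),(d) and $\beta>1$ in (b)). Second, and more substantially, the intermediate limits \eqref{iii}, \eqref{iii1}: establishing convergence to the Poisson functional ${\cal Z}_{\alpha,\beta}$, respectively to the convolution, amounts to showing that the near-unit-root block (finitely many coordinates, converging to a Poisson random measure) and the bulk contribute \emph{independent} limiting pieces and that the corresponding normalized characteristic functions converge jointly --- this is where the proof genuinely goes beyond the iterated limits of Theorem~\ref{thm1} and where most of the technical weight (truncations, domination, continuity of the relevant functionals on $D(\R)$) resides.
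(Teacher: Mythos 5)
Your proposal is correct and follows essentially the same route as the paper: conditioning on $a$ to reduce everything to $N\delta_{N,n}\to\Theta$, expanding the characteristic function of $\vep$, rescaling $1-a$ to the regime-dependent critical scale, and identifying the small- and large-$x$ tails of $\int_\R|\sum_j\theta_j f_{\tau_j}(x,s)|^\alpha\omega(\cdot)\,\d s$ as the sources of the $\alpha\beta$- and $\beta$-stable limits, with the same handling of the logarithmic borderline $\beta=1$. The only notable variation is in \eqref{iii1}/\eqref{ii1}, where you split the cumulant by the value of $a$ (near-root block vs.\ bulk) while the paper splits $S_{N,n}=S_{N,n}^-+S_{N,n}^+$ by innovation time ($s\le0$ vs.\ $s>0$); both yield the additive cumulants and hence the convolution, the paper's choice merely avoiding the need to tune a separating threshold in $1-a$.
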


\begin{remark} \label{rem1} We expect that results in Theorems \ref{thm1} and \ref{thm2}, as well as in \cite{pili14}, can be extended
to higher order RCAR models, making suitable assumptions about the mixing distribution. Particularly, Oppenheim and Viano \cite{oppe04} discussed
long memory properties of RCAR($2p$), $p\ge 1$, model with autoregressive polynomial having one positive, one negative and $p-1$ pairs of nonreal (complex conjugate) roots whose moduli are assumed to be independent 
r.v.s whose densities have power-law behavior at $1$,  
 similar to \eqref{betasing}, for possibly  different exponents $\beta_{i}$. As shown in \cite{oppe04},  these assumptions lead to oscillating asymptotics thus 
seasonal behavior of the autocovariance function of the RCAR process. A more forthright higher-order
version of the RCAR(1) equation in \eqref{AR} is the RCAR($p$) model with real positive roots, viz.,
\begin{equation} \label{ARp}
(1- a_1 B) \cdots (1-a_p B) X(t) = \vep(t), \quad t \in \Z,
\end{equation}
where $a_i \in [0,1)$, $i=1, \dots, p$, are independent r.v.s 
and $B X(t) = X(t-1)$ is the backward shift. The stationary solution
of \eqref{ARp} is written as  a MA process $X(t) = \sum_{s\le t} b(t-s) \vep(s)$, $t\in\Z$, where $b(t) := \sum_{0 \le s_1 \le \dots \le s_{p-1} \le t}
a_1^{s_1} a_2^{s_2-s_1} \cdots a_p^{t-s_{p-1}} $ satisfy
\begin{equation} \label{Bp}
\sum_{t=0}^\infty b(t) = \prod_{i=1}^p (1-a_i)^{-1} =: A.
\end{equation}
Particularly, 
it follows that given $a_1, \dots, a_p$, conditionally
\begin{equation} \label{AB}
n^{-1/2} \sum_{t=1}^{[nt]} X(t) \ \to_{\rm fdd} \ A B(\tau)
\end{equation}
which agrees with \eqref{S1conv} below for $p=1$, $\alpha =2$. In the case when each $a_i$ has a density satisfying a similar relation as in
\eqref{betasing} for some $\beta_i>0$, $\psi_{1i}>0$, $i=1,\dots,p$, 
the (random) factor $A$ in \eqref{AB} has a heavy-tailed distribution 
with tail parameter $\beta_{\min} := \min_{1\le i \le p} \beta_i$, see \cite[Corollary p.~245]{emb80},  
and we can expect that, for $\beta_{\min} < \alpha =2$, 
the suitably normalized iterated limit
$\operatorname{(fdd)} \lim_{N \to \infty}  \lim_{n \to \infty} 
S_{N,n}(\tau)$ is the sub-Gaussian process ${\cal W}_{2,{\beta_{\min}}}(\tau)$,  
following the proofs of Theorem~\ref{thm1}~\eqref{it:ii} or
\cite[Theorem~2.1~(2.10)]{pili14}. We also then expect that  the suitably normalized iterated limit $\operatorname{(fdd)} \lim_{n \to \infty}  \lim_{N \to \infty} S_{N,n}(\tau)$ is  a fractional Brownian motion with Hurst parameter 
$H = (3 - \beta_{\min})/2$ 
(or a stable self-similar process  
in the case when $X(t)$ has infinite variance). A challenging open problem is to make the
above argument rigorous and to extend it to joint limits of $S_{N,n}$ as in Theorem \ref{thm2}.

\end{remark}

\section{The  intermediate process}
\label{sec3}

This section discusses properties of the intermediate process ${\cal Z}_{\alpha, \beta}$ introduced in \eqref{calZfd}
via its finite-dimensional characteristic function. We study Poisson stochastic integral representation,
local and global self-similarity, a.s.\ continuity and other properties of  ${\cal Z}_{\alpha, \beta}$. The results extend
\cite[Proposition 3.1]{pili14} from $\alpha =2$ to $0< \alpha < 2$. Roughly speaking,
the Poisson integral  representation of  ${\cal Z}_{\alpha, \beta}$ is obtained by replacing
the Brownian motion in \cite{pili14} by L\'evy process  $\zeta_\alpha$. However, some properties of
${\cal Z}_{\alpha, \beta}$ are not `continuous' at $\alpha = 2$, particularly, the second moment of ${\cal Z}_{\alpha, \beta}$ does
not exist for $\alpha < 2$  while  ${\cal Z}_{2, \beta}$ may have higher moments than 2, see \cite{pili14}.
Clearly, these moment differences between the cases $\alpha < 2$ and $\alpha =2$ are related to the differences between
the  $\alpha$-stable  L\'evy process $\zeta_\alpha$, $\alpha < 2$, and the Brownian motion $\zeta_2 =  B$.

Assume that the homogeneous L\'evy process $\zeta_\alpha $ in \eqref{convLevy} is extended to the whole real line $\R $ and induces
a probability measure $\P_\alpha $ on the Borel sets of the Skorohod space $D(\R)$ of cadlag functions from $\R $ to $\R$.
We start with a family
\begin{equation}\label{zdef2}
z (\tau;x) := \int_{\R} f_\tau (x,s) \d \zeta_\alpha (s), \quad \tau \ge 0, \ x > 0,
\end{equation}
of integrated Ornstein-Uhlenbeck processes driven by $\zeta_\alpha$,
where $f_\tau (x,s)$ is defined in \eqref{Lambda}. 
The process ${\cal Z}_{\alpha, \beta}$ can be defined by `mixing' the above elementary processes of \eqref{zdef2} on the path space $D(\R)$ of the L\'evy process
as follows.

Let $N (\d x, \d \zeta)$ denote a Poisson random measure on the product space $\R_+ \times D(\R)$ with a
mean
$\nu (\d x, \d \zeta) = \psi_1 x^{\beta-1} \d x \times \P_\alpha (\d \zeta )$,
where $\psi_1 > 0$, $0 < \beta < \alpha \le 2$. 
Then ${\cal Z}_{\alpha, \beta} = \{ {\cal Z}_{\alpha, \beta} (\tau), \, \tau \ge 0 \}$ can be defined as a stochastic integral with
respect to the above Poisson measure:
\begin{equation}\label{calZ}
{\cal Z}_{\alpha,\beta} (\tau) := \int_{(0,1) \times D(\R)} z (\tau; x ) N(\d x, \d \zeta) +
\int_{[1,\infty) \times D(\R)} z (\tau; x ) \big( N(\d x, \d \zeta) - \nu (\d x, \d \zeta) \1 (\alpha > 1) \big).
\end{equation}
If $1 < \alpha \le 2$, $1/\alpha < \beta < \alpha$, then the two integrals in \eqref{calZ} can be combined into a single one:
\begin{equation}\label{calZ1}
{\cal Z}_{\alpha,\beta} (\tau) = \int_{\R_+ \times D(\R)} z (\tau; x) \big( N (\d x, \d \zeta) - \nu (\d x, \d \zeta) \big).
\end{equation}
These and other properties of ${\cal Z}_{\alpha,\beta}$ are stated in the following proposition (we  refer to \cite{pili14, rajp89} for general properties of stochastic integrals w.r.t.\ Poisson random measure).

\begin{proposition}\label{prop1}
\noi (i) The process ${\cal Z}_{\alpha,\beta}$ in \eqref{calZ} is well-defined for any $0 < \beta < \alpha\le 2$. It has stationary increments, infinitely divisible finite-dimensional distributions, and the joint characteristic function given by \eqref{calZfd}.

\medskip

\noi (ii) If $0 < \beta <\alpha <2$, then $\E |{\cal Z}_{\alpha,\beta} (\tau)|^p < \infty$ for any $0 < p < \alpha \min(\beta,1)$. If $0< \beta < \alpha = 2$, then $\E |{\cal Z}_{\alpha,\beta}(\tau)|^p < \infty$ for any $0 < p < 2 \beta$.

\medskip

\noi (iii) For $1 < \alpha \le 2$,  $1/\alpha < \beta < \alpha$, ${\cal Z}_{\alpha,\beta}$ can be defined as in \eqref{calZ1} and
$\E {\cal Z}_{\alpha,\beta} (\tau) = 0$. Moreover, $\E | {\cal Z}_{\alpha,\beta} (\tau) |^2 < \infty$ if and only if $1 < \beta < \alpha = 2$, in
which case
$$
\E [{\cal Z}_{2,\beta} (\tau_1) {\cal Z}_{2,\beta} (\tau_2) ]
= \frac{\sigma_\beta^2}{2} ( \tau_1^{2H} + \tau_2^{2H} - |\tau_1 -\tau_2|^{2H}), \quad \tau_1, \tau_2 \ge 0,
$$
where $H = (3-\beta)/2$ and $\sigma_\beta^2$ are the same as in \eqref{Lambdacov}.

\medskip

\noi (iv) For $1< \alpha \le 2$, $1/\alpha < \beta < \alpha$, ${\cal Z}_{\alpha,\beta}$ is a.s.\ continuous.

\medskip

\noi (v) (Asymptotic self-similarity.) As $c \to 0$,
\begin{eqnarray*}
c^{-1+(\beta-1)/\alpha} {\cal Z}_{\alpha,\beta}(c \tau) &\to_{\rm fdd}& \Lambda_{\alpha,\beta} (\tau), \qquad 1 < \beta < \alpha,\\
c^{-1} {\cal Z}_{\alpha,\beta} (c \tau) &\to_{\rm fdd}& V_{\alpha,\beta} \, \tau,  \hskip1.1cm 0 < \beta < \min(\alpha,1),\\
c^{-1} (\log(1/c))^{-1/\alpha} {\cal Z}_{\alpha,\beta} (c \tau) &\to_{\rm fdd}& V_{\alpha,1} \tau, \qquad \quad  1=\beta<\alpha,
\end{eqnarray*}
where $V_{\alpha,1}$, $V_{\alpha,\beta}$ and $\Lambda_{\alpha,\beta}$ are defined in \eqref{V1chf}, \eqref{VWdef} and \eqref{Lambda}, respectively.
For $0 < \beta < \alpha$, as $c \to \infty$,
\begin{eqnarray*}
c^{-1/\alpha} {\cal Z}_{\alpha,\beta} (c \tau) &\to_{\rm fdd}& {\cal W}_{\alpha,\beta} (\tau),
\end{eqnarray*}
where ${\cal W}_{\alpha,\beta}$ is defined in \eqref{VWdef}.
\end{proposition}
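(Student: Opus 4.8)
The plan is to derive all five parts from two explicit facts about the kernel $f_\tau$ of \eqref{Lambda} — the scaling identity $f_{c\tau}(x,s)=c\,f_\tau(cx,s/c)$ for $c>0$ and the sign-homogeneity $\omega(\lambda g)=\omega(g)$ for $\lambda>0$ read off \eqref{omega} — supplemented by the elementary estimates $\|f_\tau(x,\cdot)\|_\alpha\asymp x^{-1/\alpha}$ as $x\downarrow0$, $\|f_\tau(x,\cdot)\|_\alpha\asymp x^{-1}$ as $x\to\infty$, the self-similar scaling $\|f_h(x,\cdot)\|_\alpha=h^{1+1/\alpha}\|f_1(xh,\cdot)\|_\alpha$, and the asymptotics $G(y)\sim K_0 y^{-1}$ as $y\downarrow0$ and $G(y)\sim U y^{-\alpha}$ as $y\to\infty$ of
\[
G(y):=\int_\R\Big|\sum_j\theta_j f_{\tau_j}(y,s)\Big|^\alpha\omega\Big(\sum_j\theta_j f_{\tau_j}(y,s)\Big)\,\d s ,
\]
where $K_0=\alpha^{-1}|\sum_j\theta_j\tau_j|^\alpha\omega(\sum_j\theta_j\tau_j)$ and $U=\sum_k|\sum_{j\ge k}\theta_j|^\alpha\omega(\sum_{j\ge k}\theta_j)(\tau_k-\tau_{k-1})$ for $\tau_1\le\dots\le\tau_d$. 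All of these follow by direct computation from the closed form of $f_\tau$.

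\textbf{Parts (i)--(iii).} I would first get existence of the Poisson integrals in \eqref{calZ} from the standard criteria (\cite{rajp89, pili14}): the uncompensated part over $(0,1)\times D(\R)$ needs $\int_{(0,1)}\E[\min(|z(\tau;x)|,1)]\,x^{\beta-1}\,\d x<\infty$, the part over $[1,\infty)$ needs $\int_{[1,\infty)}\E[\min(|z(\tau;x)|^2,|z(\tau;x)|)]\,x^{\beta-1}\,\d x<\infty$ for $\alpha>1$ (the $\min(\cdot,1)$ version for $\alpha\le1$), and since $z(\tau;x)$ is $\alpha$-stable with scale $\|f_\tau(x,\cdot)\|_\alpha$ one bounds these by $C\int\|f_\tau(x,\cdot)\|_\alpha^p\,x^{\beta-1}\,\d x$ with a suitable $p<\alpha$, finite exactly under $0<\beta<\alpha$ by the norm asymptotics. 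The joint characteristic function \eqref{calZfd} then follows from the Campbell/L\'evy--Khintchine formula for Poisson integrals applied with $g=\sum_j\theta_j z(\tau_j;x)$, integrating out the path law via $\E_{\P_\alpha}\e^{\i g}=\exp\{-\int_\R|\sum_j\theta_j f_{\tau_j}(x,s)|^\alpha\omega(\cdot)\,\d s\}$ and using $\E_{\P_\alpha}z(\tau;x)=0$ for $\alpha>1$, so the compensator's linear term drops; stationary increments come from this formula together with $f_{\tau+h}(x,s)-f_h(x,s)=f_\tau(x,s-h)$ and translation-invariance of $\d s$, and infinite divisibility of the f.d.d.\ is visible from the L\'evy--Khintchine form of \eqref{calZfd}. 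For (ii), by infinite divisibility $\E|{\cal Z}_{\alpha,\beta}(\tau)|^p<\infty$ iff $\int_{\R_+}\E[|z(\tau;x)|^p\1(|z(\tau;x)|>1)]\,x^{\beta-1}\,\d x<\infty$, and estimating this against the norm asymptotics (heavy-tailed for $\alpha<2$, where $p<\alpha$ is forced; Gaussian for $\alpha=2$) yields the ranges $p<\alpha\min(\beta,1)$, resp.\ $p<2\beta$. For (iii), when $\beta>1/\alpha$ the compensator $\int_{(0,1)}z(\tau;x)\,\d\nu$ converges absolutely and vanishes, so \eqref{calZ} agrees with \eqref{calZ1}; hence $\E{\cal Z}_{\alpha,\beta}(\tau)=0$, the $L^2$-dichotomy is read off (ii), and the covariance identity $\Cov({\cal Z}_{\alpha,\beta}(\tau_1),{\cal Z}_{\alpha,\beta}(\tau_2))=\int_{\R_+}\E[z(\tau_1;x)z(\tau_2;x)]\,x^{\beta-1}\,\d x=\E\vep^2\int_{\R_+\times\R}f_{\tau_1}f_{\tau_2}\,m(\d x,\d s)$ reduces, for $\alpha=2$, to \eqref{Lambdacov}.

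\textbf{Part (iv).} For $1<\alpha\le2$, $1/\alpha<\beta<\alpha$, I would apply Kolmogorov--Chentsov: by stationary increments it suffices to bound $\E|{\cal Z}_{\alpha,\beta}(h)|^p$. Choosing $p$ with $\max(1,\beta,\tfrac{(1+\beta)\alpha}{\alpha+1})<p<\min(\alpha,\alpha\beta)$ — an interval nonempty precisely under $1/\alpha<\beta<\alpha$ — and invoking the von Bahr--Esseen/Rosenthal-type bound $\E|\int g\,\d(N-\nu)|^p\le C\int|g|^p\,\d\nu$ valid for $1\le p\le2$, one gets $\E|{\cal Z}_{\alpha,\beta}(h)|^p\le C\int_{\R_+}\|f_h(x,\cdot)\|_\alpha^p\,x^{\beta-1}\,\d x=C\,h^{p(1+1/\alpha)-\beta}\int_{\R_+}\|f_1(w,\cdot)\|_\alpha^p\,w^{\beta-1}\,\d w$, the last integral finite for this $p$; since $p(1+1/\alpha)-\beta>1$, one obtains an a.s.\ continuous version on compacts, hence on $[0,\infty)$.

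\textbf{Part (v) and the main obstacle.} This I would do purely through \eqref{calZfd}: replacing $\tau_j$ by $c\tau_j$, using $f_{c\tau}=c\,f_\tau(c\cdot,\cdot/c)$, sign-homogeneity of $\omega$, and the substitutions $s\mapsto cs$, $x\mapsto y/c$, together with the normalization $\lambda_c$, reduces the joint characteristic function of $\lambda_c{\cal Z}_{\alpha,\beta}(c\,\cdot)$ to $\exp\{\psi_1 c^{-\beta}\int_{\R_+}(\exp\{-(\lambda_c c)^\alpha c\,G(y)\}-1)\,y^{\beta-1}\,\d y\}$, where $(\lambda_c c)^\alpha c$ equals $c^\beta$, $c$, $c^\alpha$, $c/\log(1/c)$ in the four regimes. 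For $1<\beta<\alpha$, $c\downarrow0$: $c^{-\beta}(\e^{-c^\beta G}-1)\to-G$, dominated by $G$, and $\int_{\R_+}G(y)y^{\beta-1}\,\d y<\infty$ (this uses $1<\beta<\alpha$ via the $G$-asymptotics), giving the limit $-\psi_1\int_{\R_+}G(y)y^{\beta-1}\,\d y$, i.e.\ \eqref{Zriba}. For $0<\beta<\min(\alpha,1)$, $c\downarrow0$: after $y=cw$ the exponent is $\psi_1\int_{\R_+}(\e^{-cG(cw)}-1)w^{\beta-1}\,\d w$ and, since $cG(cw)\to K_0/w$, tends to $\psi_1\int_{\R_+}(\e^{-K_0/w}-1)w^{\beta-1}\,\d w=-\psi_1\beta^{-1}\Gamma(1-\beta)K_0^\beta=-\kappa_{\beta,\alpha,\beta}|\sum_j\theta_j\tau_j|^{\alpha\beta}\omega(\sum_j\theta_j\tau_j)^\beta$, matching \eqref{VWchf}; the case $c\to\infty$ is dual, with $y=cU^{1/\alpha}w$ and $G(y)\sim Uy^{-\alpha}$ producing the sub-$\zeta_\alpha$, $\beta$-stable exponent $-\kappa_{\beta/\alpha,\alpha,\beta}U^{\beta/\alpha}$ of ${\cal W}_{\alpha,\beta}$. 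The borderline $\beta=1$, $c\downarrow0$ is the delicate one: with $\varepsilon:=c/\log(1/c)$, the singularity $G(y)\sim K_0/y$ forces $\int_{\R_+}(\e^{-\varepsilon G(y)}-1)\,\d y=-\varepsilon K_0\log(1/\varepsilon)+O(\varepsilon)$, and multiplying by $\psi_1 c^{-1}$ with $\varepsilon\log(1/\varepsilon)\sim c$ yields the finite limit $-(\psi_1/\alpha)|\sum_j\theta_j\tau_j|^\alpha\omega(\sum_j\theta_j\tau_j)$, i.e.\ \eqref{V1chf}. The genuinely hard points are exactly this logarithmic regime — controlling the transition zone $y\asymp\varepsilon$ and checking the $O(\varepsilon)$ remainder does not survive the $c^{-1}$ factor — together with making the dominated-convergence/truncation steps in (v) uniform in the scaling parameter (for $0<\beta<1$ the large-$w$ tail must be controlled as $c\to0$), and threading the exponent $p$ through the competing constraints in (ii) and (iv), where the continuity exponent $p(1+1/\alpha)-\beta>1$ really needs both $\beta>1/\alpha$ and $\beta<\alpha$.
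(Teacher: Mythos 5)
Your proposal is correct and follows essentially the same route as the paper: existence and moments via the $\alpha$-stable scale $\|f_\tau(x,\cdot)\|_\alpha$ and the two-regime bound on $\int_\R|f_\tau(x,s)|^\alpha\,\d s$, continuity via Kolmogorov's criterion with $p$ chosen so that $p(1+1/\alpha)-\beta>1$, and part (v) by the scaling substitution in the characteristic function \eqref{calZfd} with dominated convergence, plus the $\delta$-truncation/transition-zone analysis for the logarithmic case $\beta=1$. The only cosmetic differences are that you phrase existence through the Rajput--Rosinski $\min$-criteria and moment finiteness through the L\'evy-measure moment criterion (which as a bonus also yields the ``only if'' direction of the $L^2$ dichotomy in (iii), a point the paper delegates to a remark), whereas the paper works directly with the $p$-th moment bound $\E|{\cal Z}^\pm_{\alpha,\beta}(\tau)|^p\le C I^\pm(p,\tau)$.
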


\begin{remark} With Proposition~\ref{prop1}(v) in mind, we may say
${\cal Z}_{\alpha, \beta}$ plays the role of a bridge between the limit processes
in Theorem~\ref{thm2}(i).
For $\alpha \neq 1$, the limit processes ${\cal W}_{\alpha,\beta}$, $\Lambda_{\alpha,\beta}$ and r.v.\ $V_{\alpha,\beta}$ in Proposition~\ref{prop1}(v)
have different stability indices $\beta$, $\alpha$ and $\alpha \beta$, respectively, so we conclude that one-dimensional distributions
${\cal Z}_{\alpha, \beta}(\tau)$ are  not stable. For $\alpha \neq 1$, the process ${\cal Z}_{\alpha, \beta} $ is also not self-similar,
because ${\cal W}_{\alpha,\beta}$, $\Lambda_{\alpha,\beta}$ and $\{ V_{\alpha,\beta} \,\tau, \, \tau \ge 0 \}$ have different self-similarity indices.
\end{remark}

\begin{remark}
	If ${\cal Z}_1, {\cal Z}_2, \dots, {\cal Z}_N$ are independent copies of ${\cal Z} := {\cal Z}_{\alpha,\beta}$, then, for any $N \in \N$,
	\begin{equation}\label{interSS}
	{\cal Z} (\tau/N^{1/\beta}) =_{\rm fdd} N^{-1/(\alpha\beta)-1/\beta}  \sum_{i=1}^N {\cal Z}_{i}(\tau).
	\end{equation}
Relation \eqref{interSS} follows from infinite divisibility of Poisson random measure $N(\d x, \d \zeta)$ in
the stochastic integral representation 	\eqref{calZ} or the characteristic function \eqref{calZfd}.
See also (\cite{pili14}, (3.30)) where the above property is related to  the {\it aggregate-similarity} property
introduced in Kaj \cite{kaj05}.
For $0<\beta<\min(\alpha,1)$,  \eqref{interSS} and Proposition \ref{prop1}(v) imply that
$N^{1/\beta} {\cal Z} (\tau/N^{1/\beta}) =_{\rm d} N^{-1/(\alpha\beta)} \sum_{i=1}^N {\cal Z}_i (\tau) \allowbreak \to_{\rm d} V_{\alpha,\beta} \tau$ as $N\to\infty$. It follows that for a fixed $\tau >0$, the (marginal) distribution of
${\cal Z} (\tau) \equiv {\cal Z}_{\alpha,\beta}(\tau)$ belongs to the domain of normal attraction of an $(\alpha\beta)$-stable distribution, that is,
${\cal Z}_{\alpha,\beta}(\tau) \in {\cal D}(\alpha\beta)$ except possibly for the case $\alpha\beta = 1$, when the distribution of
${\cal Z}_{\alpha,\beta}(\tau)$ is not symmetric. Similarly,
$N^{H/\beta} {\cal Z}(\tau/N^{1/\beta}) =_{\rm d} N^{-1/\alpha} \sum_{i=1}^N {\cal Z}_i (\tau) \to_{\rm d} \Lambda_{\alpha,\beta} (\tau)$, where $H=1-(\beta-1)/\alpha$, implying ${\cal Z}_{\alpha,\beta} (\tau) \in {\cal D}(\alpha)$ for $1<\beta<\alpha$. These facts
entail the precise asymptotic behavior of tail probabilities of ${\cal Z}_{\alpha,\beta} (\tau)$ for $\alpha < 2 $ and $\tau >0$ fixed,
particularly, they show that condition $p < \alpha \min(\beta,1)$ in
Proposition \ref{prop1} (ii) cannot be improved.

\end{remark}

\begin{remark}
   Let $0 < \alpha < \beta < 1$. The limit process $\{ {\cal Z}^*_{\alpha,\beta} (\tau) := V_{\alpha,\beta}\, \tau
   + \kappa^{1/\alpha}_\alpha \zeta_\alpha (\tau), \, \tau \ge 0 \}$ in \eqref{iii1},
   Theorem~\ref{thm2}(ii) can be also regarded  as a `bridge' between the other two limit processes in \eqref{i1} and \eqref{ii1} since it is
   both locally and globally asymptotically self-similar:
   \begin{eqnarray*}
   c^{-1} {\cal Z}^*_{\alpha,\beta} (c \tau) &\to_{\rm fdd}& V_{\alpha,\beta}\, \tau,  \hskip1.1cm \text{as } c \to 0,\\
    c^{-1/\alpha} {\cal Z}^*_{\alpha,\beta}(c \tau) &\to_{\rm fdd}& \kappa^{1/\alpha}_\alpha \zeta_\alpha (\tau),  \quad \text{as } c \to \infty.
   \end{eqnarray*}
\end{remark}

\section {Proofs}\label{sec4}

We first present some preliminary facts that will be used in the proofs.

Let $0 < \alpha \le 2$. The characteristic function
of a r.v.\  $\vep \in {\cal D}(\alpha)$ has the following  representation in a neighborhood
of the origin (see, e.g., \cite[Theorem~2.6.5]{ibra71}): there exists
an $\epsilon>0$ such that 
\begin{equation}
\E {\e}^{{\i} \theta \vep} = {\e}^{-|\theta|^\alpha \omega(\theta) h(\theta)} \qquad \text{for any}  \ \theta \in \R, \ |\theta| < \epsilon,
\label{chfvep}
\end{equation}
where $h (\theta)$ is a positive function
tending to 1  as $\theta \to 0$ and $\omega(\theta)= \omega ( \operatorname{sign}(\theta) )$ is the same as in \eqref{omega}.

For $a\in [0,1)$ and $n\in \N$, let $c_n (a,s) := \sum_{t=1}^n a^{t-s} \1 (s \le t)$ and note the
following elementary inequalities:
\begin{equation}\label{ineq:dct}
\sum_{s \le 0} |c_n(a,s)|^\alpha
\le \frac{1}{\min(\alpha,1) (1-a)} \min \big(n, \frac{1}{1-a} \big)^\alpha,
\quad \quad \sum_{s=1}^n |c_n (a,s)|^\alpha \le n \min \big(n, \frac{1}{1-a} \big)^\alpha.
\end{equation}


For $z \in \mathbb{C}$, $\operatorname{Re}(z) \le 0$, we have
\begin{equation}\label{ineq:exp}
| \e^{z} - 1 | \le  \min(2, |z|), \quad |\e^{z} - 1 - z| \le |z|^2.
\end{equation}


\bigskip

\begin{proof}[Proof of Theorem \ref{thm1}]
The iterated limits \eqref{it:i}, \eqref{it:i0} follow from \cite[Theorems~2.1, 3.1, Proposition~2.3]{pupl10}.

\medskip

\noi  {\it Proof of \eqref{it}.}  It suffices to prove that
\begin{equation}\label{lim}
\bar X_N (t) := (N\log N)^{-1/\alpha} \sum_{i=1}^N X_i (t) \to_{\rm fdd} V_{\alpha,1},
\end{equation}
or that, for any $d \in \N$ and $\theta_t \in \R$, $t=1,\dots,d$,
\begin{equation}\label{lim:chf}
\E \e^{ \i \sum_{t=1}^d \theta_t \bar X_N (t)} \to \E \e^{\i \sum_{t=1}^d \theta_t V_{\alpha,1}} = \e^\Theta \quad \text{with } \Theta := - \frac{\psi_1}{\alpha} \big| \sum_{t=1}^d \theta_t \big|^\alpha \omega \big( \sum_{t=1}^d \theta_t \big).
\end{equation}
 Since $X_1,\dots,X_N$ are independent copies of $X$ in \eqref{AR2}, the l.h.s.\ of \eqref{lim:chf} can be rewritten as
 $( 1+\frac{\Theta_{N}}{N} )^N$ 
with
\begin{align}
\Theta_{N}
&:= N \E \big[ \exp \big\{ \i (N \log N)^{-1/\alpha} \sum_{t=1}^d \theta_t X(t) \big\} - 1 \big]\nn \\
&= N \int_{[0,1)} \big( \prod_{s \in \Z} \E \exp \big\{ \i (N \log N)^{-1/\alpha} c (u,s) \vep (s) \big\} - 1 \big) \phi(u) \d u \nn \\
&= N \int_{[0,1)} \big( \exp\big\{ - \frac{K_N(u)}{ (1-u) N \log N }
 \big\}  - 1 \big) \phi(u) \d u \label{THN}
\end{align}
where
$$
c(u,s) := \sum_{t=1}^d \theta_t u^{t-s} \1 (s \le t), \qquad
K_N(u) := (1-u) \sum_{s \in \Z} \left| c(u,s) \right|^\alpha \omega \big( c (u,s) \big) h \big( (N\log N)^{-1/\alpha}c (u,s) \big),
$$
(in \eqref{THN} we used \eqref{chfvep} and the fact that $c(u,s)$ is bounded uniformly in $u \in [0,1), s \in \Z$).
For $\delta \in (0,1) $ split
\begin{align*}
\Theta_N
&= N \big\{\int_{0}^{1-\delta}   + \int_{1-\frac{\delta}{N}}^1  + \int_{1-\delta}^{1-\frac{\delta}{N}} \big\} \big( \exp\big\{ - \frac{K_N(u)}{ (1-u) N \log N } \big\}  - 1 \big)
\phi(u) \d u =:  \sum_{i=1}^3 \Theta_{N,\delta}^{i}.
\end{align*}
Then \eqref{lim:chf} or $\lim_{N \to \infty} \Theta_N = \Theta $ follows from
\begin{equation}\label{THNi}
\lim_{\delta \to 0} \limsup_{N \to \infty}|\Theta_{N,\delta}^{i}| =0, \quad i=1,2, \qquad \lim_{\delta \to 0} \limsup_{N \to \infty}|\Theta_{N,\delta}^3
- \Theta| =0.
 \end{equation}
Here, $| \Theta_{N,\delta}^2 |\le 2 N \int_{1-\frac{\delta}{N}}^1 \phi (u) \d u \le C \delta $ for all $N$ large enough, 
implying \eqref{THNi} for $i=2 $.
Next, using \eqref{ineq:dct},  $\sum_{s \in \Z} |c(u,s)|^\alpha
\le C (1-u)^{-1}$. Therefore, $|K_N(u)| \le C$, $u  \in [0,1)$, and,  by \eqref{ineq:exp}, we obtain
$| \Theta_{N, \delta}^1 | \le C (\log N)^{-1} \int_0^{1-\delta} (1-u)^{-1} \phi(u) \d u= C (\delta \log N)^{-1}$,
proving \eqref{THNi} for $i=1$.

Consider the last relation in \eqref{THNi}. 
In view of \eqref{atail2}, we can replace  $\Theta_{N,\delta}^3$ by
$\Theta_{N,\delta}^4:= \psi_1 N \int_{ 1-\delta}^{1-\frac{\delta}{N}} ( \exp \{ - \frac{K_N(u)}{(1-u)N} \allowbreak \frac{1}{\log N} \} - 1 ) \d u = \psi_1 \int_{\delta}^{\delta N} ( \exp \{ - \frac{K_N ( 1-\frac{x}{N})}{x \log N} \}  - 1 ) \d x$,
which in turn can be replaced by
\begin{align*}
\Theta_{N,\delta}^5 &:=
- \psi_1 \int_\delta^{\delta N}  \frac{K_N (1-\frac{x}{N})}{x \log N} \d x
\end{align*}
since $|\Theta_{N,\delta}^4 - {\Theta}^{5}_{N,\delta} | \le C \int_\delta^{\delta N} \frac{\d x}{(x \log N)^{2}} = o(1)$, $N \to \infty$, follows from
\eqref{ineq:exp}.
We can  rewrite  $\Theta $ in \eqref{lim:chf}  in a similar way:
\begin{align*}
\Theta = - \psi_1 K \int_{\delta}^{\delta N} \frac{\d x}{x \log N}  \quad \text{with } K := \frac{1}{\alpha} \big| \sum_{t=1}^d \theta_t \big|^\alpha \omega \big( \sum_{t=1}^d \theta_t \big).
\end{align*}
Thus, the last relation in  \eqref{THNi} follows from $\lim_{\delta \to 0} \limsup_{N \to \infty} \int_\delta^{\delta N} |K_N (1-\frac{x}{N})- K | \frac{\d x}{x \log N}
= 0$ or
\begin{eqnarray}\label{THN3}
\limsup_{N \to \infty} \sup_{\delta < x < \delta N} \big|K_N \big(1-\frac{x}{N}\big)- K\big| \le \epsilon(\delta),
 \end{eqnarray}
where $\lim_{\delta \to 0} \epsilon(\delta) = 0$.
To prove \eqref{THN3}, denote $|z|_\omega^\alpha := |z|^\alpha \omega (z)$, $z \in \R,$ and note that $\sup_{0 < x < \delta N} |K_N (1-\frac{x}{N}) -
\tilde K(\frac{N}{x})| = o(1)$, $N \to \infty$, where
\begin{align}
\tilde K(y)&:=\frac{1}{y} \sum_{s\in \Z} \big|\sum_{t=1}^d \theta_t \big( 1 - \frac{1}{y} \big)^{t-s} \1(s\le t)\big|^\alpha_\omega \nn \\
&=\int_{\R}  \big|\sum_{t=1}^d \theta_t \big(1 - \frac{1}{y} \big)^{t-[sy]} \1([sy]\le t)\big|^\alpha_\omega \d s\nn \\
&\to\int_{-\infty}^0 \big|\sum_{t=1}^d \theta_t  \e^{s} \big|^\alpha_\omega \d s \  = \ K, \quad y \to  \infty, \label{tildeK}
\end{align}
by the dominated convergence theorem (DCT) using
$1 - z \le \e^{-z}$, $z \ge 0$. Hence,
$\sup_{0 < x < \delta N} |\tilde K(\frac{N}{x}) - K| \le \epsilon (\delta) = o(1)$, $\delta \to 0$, implying
\eqref{THN3} and \eqref{THNi}. This completes the proof of \eqref{it}.


\medskip

\noi {\it Proof of \eqref{it:ii}}. Let us first prove that
\begin{equation}\label{S1conv}
n^{-1/\alpha} S_{n} (\tau) :=  n^{-1/\alpha} \sum_{t=1}^{[n\tau]} X (t) \to_{\rm fdd} (1-a)^{-1} \zeta_\alpha (\tau),
\end{equation}
where $\zeta_\alpha$, $X$ are the same as in \eqref{convLevy}, \eqref{AR2}, and $a \in [0,1)$ is fixed.
It suffices to show that, for any $d \in \N$ and $\tau_j > 0$, $\theta_j \in \R$, $j = 1, \dots, d$,
\begin{equation}\label{C1ch}
\E_a \exp \big\{ \i  n^{-1/\alpha} \sum_{j=1}^d \theta_j S_{n} (\tau_j) \big\} \to
\E_a \exp \big\{ \i (1-a)^{-1} \sum_{j=1}^d \theta_j \zeta_\alpha (\tau_j) \big\},
\end{equation}
where $\E_a [\cdot] = \E [\cdot |a ]$ stands for conditional expectation.
For brevity of notation, we restrict the proof of \eqref{C1ch} (as well as all the rest in this theorem) to $d = 1$ and $\tau_1 = \tau > 0$, $\theta_1 = \theta \in \R$.
Split
$
\E_a [\e^{\i \theta n^{-1/\alpha} S_n (\tau)} ( \1 (a \in I_n) + \1 (a \in I_n^c)  ) ] =: \Phi'_n (\theta,a) + \Phi''_n (\theta,a),
$
where $I_n := [0,1 - \frac{\log n}{n^{1/\alpha}} )$, $I_n^c := [0,1) \setminus I_n $. For $c_n(a,s)$ in \eqref{ineq:dct},
$\sup_{a \in I_{n}, s \in \Z} | n^{-1/\alpha}  c_{[n\tau]}(a,s) | = O( (\log n)^{-1} ) = o(1)$. Hence for all $n$ large enough,
we can use \eqref{chfvep} to rewrite  $\Phi'_n(\theta,a)$ as
$\Phi'_n (\theta,a) =  \e^{ - |\theta|^\alpha \omega (\theta)  K_n (a) } \1 (a \in I_n)$,
where
$$
K_n (a) := n^{-1} \sum_{s \in \Z} |c_{[n\tau]}(a,s) |^\alpha h ( \theta n^{-1/\alpha} c_{[n\tau]}(a,s)  )
$$
and
$
\sup_{a \in I_n, s \in \Z} | h ( \theta n^{-1/\alpha} c_{[n\tau]}(a,s) ) - 1 | = o (1).
$
Relation \eqref{C1ch} follows from
$\lim_{n \to \infty} K_n (a) =
\tau (1- a)^{-\alpha}$
for every $a \in [0,1)$ and $\lim_{n \to \infty} \Phi''_{n} (\theta,a) = 0$. Both these facts are completely
elementary, and we omit the details. This proves
\eqref{S1conv} for $d = 1$. The proof for $d > 1$ follows similarly.

Let $\{ (1-a_i)^{-1} \zeta_{\alpha, i} (\tau), \, \tau \ge 0 \}$, $i=1,2, \dots$, be independent copies of $\{ (1-a)^{-1} \zeta_\alpha (\tau), \, \tau \ge 0 \}$. With \eqref{S1conv} in mind, \eqref{it:ii} follows from
\begin{equation}\label{S2conv1}
N^{-1/\beta} \sum_{i=1}^N (1-a_i)^{-1} \zeta_{\alpha,i} (\tau) \to_{\rm fdd} {\cal W}_{\alpha,\beta} (\tau).
\end{equation}
Consider the one-dimensional convergence in \eqref{S2conv1} at $\tau = 1$. For $\theta \in \R$, we have
$
\E \exp \{ \i \theta  N^{-1/\beta}
\sum_{i=1}^N (1-a_i)^{-1} \zeta_{\alpha,i} (1)\} =
(\E \exp \{ \i \theta N^{-1/\beta} (1-a)^{-1} \zeta_\alpha (1) \} )^N = (1 + \frac{\Theta_N}{N} )^N,
$
where
$$
\Theta_N := N\E  [\e^{-N^{-\alpha/\beta} (1-a)^{-\alpha} |\theta|^\alpha  \omega (\theta)} - 1].
$$
We also have
$\E \e^{ \i \theta {\cal W}_{\alpha,\beta} (1)} = \e^{- \kappa_{\beta/\alpha,\alpha, \beta} |\theta|^\beta (\omega(\theta))^{\beta/\alpha}},
$
see \eqref{VWchf}. The desired convergence in \eqref{S2conv1} at $\tau = 1$ follows from
\begin{equation}\label{Uch1}
\Theta_N \to - \kappa_{\beta/\alpha,\alpha, \beta} |\theta|^\beta (\omega(\theta))^{\beta/\alpha}, \quad \forall \theta \in \R.
\end{equation}
By assumption \eqref{betasing}, there exists an $\epsilon > 0$ and a constant $C > 0$ such that $\phi (u) \le C (1-u)^{\beta-1}$ for all $u \in [1-\epsilon,1)$. Split
\begin{align*}
\Theta_N = \Theta^0_N + \Theta^1_N := N\E\big[ (\e^{- N^{-\alpha/\beta} (1-a)^{-\alpha} |\theta|^\alpha  \omega (\theta)} - 1 ) \big( \1 (0\le a < 1-\epsilon) + \1 (1-\epsilon \le a < 1) \big) \big],
\end{align*}
where $|\Theta^0_N| \le  N^{1-\alpha/\beta} C \epsilon^{-\alpha} |\theta|^\alpha \P ( 0 \le a < 1- \epsilon ) = o(1)$ since $\beta < \alpha.$
Hence, it suffices to prove \eqref{Uch1} for $\Theta^1_N$ instead of $\Theta_N$.
By change of a variable,
$$
\Theta^1_N = N \int_{1-\epsilon}^1 (\e^{-N^{-\alpha/\beta} (1-u)^{-\alpha} |\theta|^\alpha \omega (\theta) } - 1 ) \phi (u) \d u =  \int_0^{\epsilon N^{1/\beta}} (\e^{-x^{-\alpha} |\theta|^{\alpha} \omega (\theta) } - 1 ) N^{1-1/\beta} \phi \big( 1 - \frac{x}{N^{1/\beta}} \big) \d x,
$$
where $N^{1-1/\beta} \phi ( 1 - \frac{x}{N^{1/\beta}} ) \to \psi_1 x^{\beta-1}$ for $x \in \R_+$ by assumption \eqref{betasing}.
Moreover, $N^{1-1/\beta} \phi ( 1 - \frac{x}{N^{1/\beta}} ) \le C x^{\beta-1}$ for $x \in (0, \epsilon N^{1/\beta}]$. Therefore, the DCT implies that
$$
\Theta_N^1 \to \psi_1 \int_0^\infty (\e^{-x^{-\alpha} |\theta|^{\alpha} \omega (\theta) } - 1 ) x^{\beta-1} \d x = - \kappa_{\beta/\alpha,\alpha, \beta} |\theta|^\beta (\omega (\theta))^{\beta/\alpha},
$$
see  \eqref{kappas}, \eqref{Wdef}.  This proves \eqref{Uch1} and \eqref{S2conv1}.

\medskip


\noi {\it Proof of \eqref{it:ii1}}. Note that \eqref{S1conv} also holds for $\beta > \alpha$. Let $\{ (1-a_i)^{-1} \zeta_{\alpha, i} (\tau), \, \tau \ge 0 \}$, $i=1,2, \dots$, be as
in  \eqref{S2conv1}.     
It suffices to prove that
\begin{equation}\label{S2conv}
N^{-1/\alpha} \sum_{i=1}^N (1-a_i)^{-1} \zeta_{\alpha,i} (\tau) \to_{\rm fdd} \kappa^{1/\alpha}_\alpha \zeta_\alpha (\tau).
\end{equation}
Consider the one-dimensional convergence in \eqref{S2conv} at $\tau = 1$. For any $\theta \in \R$, we have
$
\E \exp \{ \i \theta  N^{-1/\alpha}
\sum_{i=1}^N (1-a_i)^{-1} \zeta_{\alpha,i}(1) \} =  (1 + \frac{\Theta_N}{N} )^N,
$
where
$
\Theta_N := N\E  [\e^{-N^{-1} (1-a)^{-\alpha} |\theta|^\alpha  \omega (\theta)} - 1] \to - \kappa_\alpha
|\theta|^\alpha  \omega (\theta) = \log \E \e^{ \i \theta \kappa^{1/\alpha}_\alpha \zeta_\alpha (1)}
$
with $\kappa_\alpha =\E  (1-a)^{-\alpha} < \infty$
by the DCT. The general finite-dimensional convergence in  \eqref{S2conv} follows in a similar way.
This proves \eqref{it:ii1} and completes the proof of
Theorem~\ref{thm1}.
\end{proof}


\begin{proof}[Proof of Theorem \ref{thm2}]
In each case of Theorem~\ref{thm2}, we will prove that, for any $d \in \N$, $0< \tau_1 < \dots < \tau_d<\infty$, and $\theta_j \in \R$, $j=1,\dots,d$, as $N,n \to \infty$,
\begin{equation}\label{lim:fdd}
\E \e^{ \i A_{N,n}^{-1} \sum_{j=1}^d \theta_j S_{N,n} (\tau_j) } \to \E \e^{\i \sum_{j=1}^d \theta_j {\cal S}(\tau_j)},
\end{equation}
where $A_{N,n} \to \infty$ denotes a sequence of normalizing constants and $\{ {\cal S} (\tau), \, \tau \ge 0 \}$ denotes the limit process. Since $X_1, X_2, \dots$ are independent processes, we can rewrite the l.h.s.\ of \eqref{lim:fdd} as $(1 + \frac{\Theta_{N,n}}{N} )^N$ and  reduce the proof to showing that
\begin{equation}\label{lim}
\Theta_{N,n} := N \E \big[ \exp \big\{ \i A_{N,n}^{-1} \sum_{j=1}^d \theta_j S_{1,n} (\tau_j) \big\} - 1 \big]
\to \log \E \e^{ \i \sum_{j=1}^{d} \theta_j {\cal S} (\tau_j) } =: \Theta 
\end{equation}
as $N,n \to \infty$, in each case of Theorem~\ref{thm2}. Conditioning on $a$, we have
\begin{equation}\label{def:vartheta}
\Theta_{N,n} = N \int_{[0,1)} \big[ \prod_{s \in \Z} \varphi_{\vep} \big( A_{N,n}^{-1} \vartheta_n (u,s) \big) - 1 \big] \phi(u) \d u \quad \text{with } \vartheta_n (u,s) := \sum_{j=1}^d \theta_j \sum_{t=1}^{[n\tau_j]} u^{t-s} \1 (s \le t),
\end{equation}
 where $\varphi_\vep (\theta) := \E \e^{\i \theta \vep}$, $\theta \in \R$, is the characteristic function of $\vep \in {\cal D}(\alpha)$.
Next, we  need to split the interval $[0,1) = I_{N,n} \cup I_{N,n}^c$ with $I_{N,n} := [0,1-u_{N,n})$, where $u_{N,n} \to 0$ is chosen so that
$\sup_{u \in I_{N,n}, s \in \Z} |\vartheta_n(u,s)| = O (u_{N,n}^{-1})= o(A_{N,n})$ and
$|N \E [ (\exp \{ \i A_{N,n}^{-1} \sum_{j=1}^d \theta_j S_{1,n} (\tau_j) \} - 1) \1 (a_1 \in I_{N,n}^c  ) ] | \le C N \P ( a_1 \in I_{N,n}^c ) \le C N \int_{I_{N,n}^c} (1-u)^{\beta-1} \d u = O (N u_{N,n}^\beta) = o (1) $ is negligible. By doing so, we obtain that  $h ( A_{N,n}^{-1} \vartheta_n ( u,s )) \to  1 $ uniformly in $u \in I_{N,n}, s \in \Z$, and, taking into account \eqref{chfvep}, \eqref{lim}--\eqref{def:vartheta},
\begin{equation*}
\Theta_{N,n} \sim N \int_{I_{N,n}} \Big( \exp \Big\{ - A_{N,n}^{-\alpha}  \sum_{s \in \Z} | \vartheta_n (u,s) |^\alpha \omega ( \vartheta_n (u,s) ) \Big\} - 1 \Big) \phi (u) \d u.
\end{equation*}
In order to avoid this rather tedious step,
from now on, we will assume that $h (\theta)\equiv 1$. That is, $\vep =_{\rm d} \zeta_\alpha (1)$ has a stable distribution and we can take $u_{N,n}\equiv 0$.
Moreover, for simplicity of exposition, in  cases (i) and (ii) of Theorem~\ref{thm2},
we also assume that $\phi (u) \equiv \psi_1 (1-u)^{\beta-1}$ in \eqref{def:vartheta}. Similar
simplifications are also imposed in the proof of \cite[Theorem~2.2]{pili14}.  Finally, since $\omega (\theta)$ depends on the sign
of $\theta $ alone, we shall assume that $\omega (\theta) \equiv 1 $. After all, $\Theta_{N,n} $ of \eqref{def:vartheta} reduces to
\begin{equation}\label{ThetaNn}
\Theta_{N,n} = \psi_1  N \int_{[0,1)} \big( \exp \big\{ - A_{N,n}^{-\alpha}
\sum_{s \in \Z} |\vartheta_n (u,s) |^\alpha  \big\} - 1 \big) (1-u)^{\beta -1} \d u.
\end{equation}
The only exception is the `short memory' case (iii) of Theorem~\ref{thm2} where we use \eqref{ThetaNn} with $\phi(u)$
instead of $\psi_1 (1-u)^{\beta-1}$, $u \in [0,1)$.

We consider each case of Theorem~\ref{thm2} separately.

\medskip
	
\noi {\it Proof of Theorem~\ref{thm2}(i).} Let $N,n \to \infty$ so that $\mu_{N,n} := N^{1/\beta}/n \to \mu \in [0,\infty]$ and set
\begin{eqnarray}
&B_{N,n} := \begin{cases}n, \\
N, \\
N^{1/\beta}, \\
N^{1/\beta},
\end{cases} \quad
A_{N,n} := n^{1+1/\alpha} \begin{cases}
\mu_{N,n}^{\beta/\alpha}, &\text{if } \mu = \infty, \ 1 < \beta < \alpha,\\
(\mu_{N,n} \log \mu_{N,n})^{1/\alpha}, & \text{if } \mu = \infty, \ 1 = \beta < \alpha,\\
\mu_{N,n}^{1/\alpha}, &\text{if } \mu = \infty, \ 0 < \beta < \min(\alpha,1),\\
\mu_{N,n}, &\text{if } \mu \in [0, \infty), \ 0 < \beta < \alpha.
\end{cases}
\end{eqnarray}
($B_{N,n}$ and $A_{N,n}$ are simultaneously defined in the above four cases of $\mu, \alpha, \beta $.)
Note that $A_{N,n}$ agree with respective normalizations in Theorem~\ref{thm2}(i).
After the change of variable $B_{N,n} (1-u) = x$, \eqref{ThetaNn} can be rewritten as
$\Theta_{N,n} = (\psi_1 N/B^\beta_{N,n}) \int_0^{B_{N,n}} (\e^{ - K_{N,n}(x)} -1 ) x^{\beta -1} \d x $, where
\begin{align}
K_{N,n}(x) &= K_{N,n}^- (x) + K_{N,n}^+ (x)
:= A_{N,n}^{-\alpha} \sum_{s \le 0} \big| \vartheta_n \big( 1-\frac{x}{B_{N,n}}, s \big) \big|^\alpha
+ A_{N,n}^{-\alpha} \sum_{s > 0} \big| \vartheta_n \big( 1-\frac{x}{B_{N,n}}, s \big) \big|^\alpha.
\label{def:KNn}
\end{align}

\smallskip

\noi {\it Proof of \eqref{iii}} (case $\mu \in (0,\infty)$, $0 < \beta < \alpha$). The r.h.s.\ of \eqref{lim} can be written as the integral
$\Theta = 
\psi_1  \int_0^\infty ( \e^{-K_\mu (x)} - 1 ) x^{\beta-1} \d x,$ see \eqref{calZfd},
where
\begin{equation}\label{def:Kmu}
K_\mu (x) := \mu \int_{\R} \big| \sum_{j=1}^d \theta_j f_{\tau_j/\mu} (x,s) \big|^\alpha 
\d s
\end{equation}
and $f_\tau (x,s)$ as in \eqref{Lambda}. Using $B_{N,n} = N^{1/\beta}$ and
writing the sums over integers $s$, $t$ in \eqref{def:KNn}, \eqref{def:vartheta} as integrals,
after the change of variables $s \to N^{1/\beta}s$,  $t \to N^{1/\beta} t$,
the l.h.s.\ of \eqref{lim} becomes 
$\Theta_{N,n} = \psi_1 \int_0^{N^{1/\beta}} (\e^{ - K_{N,n} (x) } - 1) x^{\beta-1} \d x$
with
$K_{N,n} (x) 
= \mu_{N,n} \int_\R | \tilde \vartheta_{N,n} (x,s) |^\alpha 
\d s$,
where
\begin{align}
\tilde \vartheta_{N,n} (x,s)
&:=\frac{1}{N^{1/\beta}} \vartheta_n \big( 1 - \frac{x}{N^{1/\beta}}, [N^{1/\beta} s] \big)\nn \\
&= \sum_{j=1}^d \theta_j
\int_{\R}
\big( 1 - \frac{x}{N^{1/\beta}} \big)^{[N^{1/\beta} t] - [N^{1/\beta} s]} \1 ( \max([N^{1/\beta} s],1) \le [N^{1/\beta} t] \le [n \tau_j] ) \d t\nn \\
&\to \sum_{j=1}^d \theta_j f_{\tau_j/\mu} (x,s) \label{lim:tildevartheta}
\end{align}
for any $x \in \R_+$, $s \in \R$.
Using $1-z \le \e^{-z}$, $z \in [0,1]$, we get the dominating bound $| \tilde \vartheta_{N,n} (x,s) | \le C f_{2 \tau_d/\mu} (x,s)$, 
$x \in (0, N^{1/\beta}]$, $s \in \R$, implying $K_{N,n} (x) \to K_\mu (x)$, $x \in \R_+$,
 by the  DCT.  Using \eqref{ineq:exp},
 we can extend the last dominating bound to $|K_{N,n}(x)| \le C \int_\R |f_{2\tau_d/\mu}(x,s)|^\alpha \d s$,
$x \in (0, N^{1/\beta}]$, where the last integral is estimated in \eqref{ineq:f}. Then, another application of the DCT yields the convergence in 
\eqref{lim}, proving  \eqref{iii}.

\medskip

\noi {\it Proof of \eqref{i}} (case $\mu = \infty$, $1 < \beta < \alpha$). By \eqref{Zriba}, the r.h.s.\ of \eqref{lim} equals to
$ 
 - \psi_1 \int_0^\infty K_1 (x) x^{\beta-1} \d x$, where  $K_1 (x)$ is as in \eqref{def:Kmu} (with $\mu = 1$).
After a change of variable,
the l.h.s.\ of \eqref{lim} can be written as
$\Theta_{N,n} = \psi_1 \int_0^n \mu_{N,n}^\beta ( \e^{ - \mu_{N,n}^{-\beta} K_{N,n} (x) } - 1 ) x^{\beta-1} \d x, $
with $K_{N,n} (x) := \int_{\R} | \tilde \vartheta_{N,n} (x,s) |^\alpha 
\d s $ and
$\tilde \vartheta_{N,n} (x,s) := \frac{1}{n} \vartheta_n ( 1 - \frac{x}{n}, [n s] ) \to \sum_{j=1}^d \theta_j f_{\tau_j} (x,s)$
for any $x \in \R_+$, $s \in \R$ (to justify the last relationship, use \eqref{lim:tildevartheta} with $N^{1/\beta}$ replaced by $n$). Therefore, $K_{N,n}(x) \to K_1 (x), \
\mu_{N,n}^\beta ( \e^{ - \mu_{N,n}^{-\beta} K_{N,n} (x) } - 1 ) \to -  K_1 (x), \  x \in \R_+ $ and, finally, the convergence in \eqref{lim}
follows using the DCT similarly to the proof before. This proves \eqref{i}.

\medskip

\noi {\it Proof of \eqref{i0}} (case $\mu = \infty$, $0 < \beta < \min(\alpha,1)$).
The r.h.s.\ of \eqref{lim} can be written as
\begin{eqnarray}\label{chfV}
&\Theta = 
-\kappa_{\beta,\alpha,\beta} \big| \sum_{j=1}^d \theta_j \tau_j \big|^{\alpha \beta} 
= \psi_1 \int_0^\infty \big( \exp \big\{- \frac{1}{\alpha x} \big|\sum_{j=1}^d \theta_j \tau_j \big|^\alpha
\big\} - 1 \big) x^{\beta-1} \d x,
\end{eqnarray}
see \eqref{Wdef}--\eqref{kappas}.
On the other hand, for the l.h.s.\ of \eqref{lim}, after a change of variable, we get
$\Theta_{N,n} = \psi_1 \int_0^{N^{1/\beta}} ( \e^{-K_{N,n}(x)} - 1 ) x^{\beta-1} \d x,$
where $K_{N,n} (x) = K_{N,n}^- (x) + K_{N,n}^+ (x)$ is given in \eqref{def:KNn} with $B_{N,n} = N^{1/\beta}$. Rewrite
$
K_{N,n}^- (x) = \int_{\R} | \tilde \vartheta_{N,n} (x,s) |^\alpha 
\d s,
$
where
\begin{eqnarray*}
&\tilde \vartheta_{N,n} (x,s) 
:= \sum_{j=1}^d \theta_j \int_{\R} \big( 1-\frac{x}{N^{1/\beta}} \big)^{[nt]-[N^{1/\beta}s]} \1 ( 1 \le [nt] \le [n\tau_j], \, [N^{1/\beta} s] \le 0 ) \d t
\to \sum_{j=1}^d \theta_j \tau_j \e^{x s} \1 (s \le 0)
\end{eqnarray*}
in view of $n/N^{1/\beta} \to 0$,
for any $x > 0$, $s \in \R, s \ne 0$. We have the dominating bound
$|\tilde \vartheta_{N,n}(x,s)| \le C \e^{xs} \1 (s \le C)$, $x \in (0, N^{1/\beta}]$, $s \in \R$.
Whence, 
\begin{equation}\label{lim:Kminus}
K_{N,n}^- (x) \to \frac{1}{\alpha x} \big| \sum_{j=1}^d \theta_j \tau_j \big|^\alpha \quad \text{and} \quad |K_{N,n}^- (x)| \le \frac{C}{x}
\end{equation}
for $x \in (0,N^{1/\beta}]$. Relation \eqref{ineq:dct} implies
$$
|K_{N,n}^+ (x)| \le \frac{C n}{A_{N,n}^{\alpha}} \min \big( n, \frac{N^{1/\beta}}{x} \big)^{\alpha} \le \frac{C}{\mu_{N,n}} \min \big(1,  \frac{\mu_{N,n}}{x} \big)^{\min({\alpha},1)} = o(1)
$$
uniformly in $x > 0$ as $N,n\to\infty$. Moreover, we conclude that $|K_{N,n}(x)| \le C x^{-\min(\alpha,1)}$ for $x \in (1,N^{1/\beta}]$. Since $0 < \beta < \min(\alpha,1)$, in view of \eqref{ineq:exp}, the DCT  implies 
\eqref{lim}, proving   \eqref{i0}.

\medskip

\noi 
	{\it Proof of  \eqref{i00}} (case $\mu = \infty$, $1=\beta < \alpha$). Write the r.h.s.\ and l.h.s.\
	of \eqref{lim} respectively as
	$\Theta = - \frac{\psi_1}{\alpha} | \sum_{j=1}^d \theta_j \tau_j |^\alpha $ and
	\begin{align*}
	\Theta_{N,n} = N  \psi_1\int_{[0,1)} \big( \exp \big\{ - \frac{ K_{N,n}(u)}{(1-u) N \log \frac{N}{n}} \big\} - 1 \big)  \d u \quad  \text{with }
	K_{N,n}(u) := n^{-\alpha} (1-u) \sum_{s \in \Z} |\vartheta_n (u,s)|^\alpha.
	\end{align*}
	For given $\delta \in (0,1)$ split
	\begin{align*}
	\Theta_{N,n} &= N  \psi_1 \big\{\int_0^{1- \frac{\delta}{n}} + \int_{1- \frac{\delta}{N}}^1 + \int_{1-\frac{\delta}{n}}^{1- \frac{\delta}{N}}\big\}\big( \exp \big\{ - \frac{K_{N,n}(u)}{(1-u) N \log \frac{N}{n}} \big\} - 1 \big)  \d u
	=:\sum_{i=1}^3 \Theta_{N,n,\delta}^i. 
	\end{align*}
	By  \eqref{ineq:dct} and \eqref{ineq:exp}, for every $\delta>0$,
	\begin{align*}
	|\Theta_{N,n,\delta}^1| &\le \frac{C}{n^\alpha \log \frac{N}{n}} \int_{0}^{1-\frac{\delta}{n}} \Big( \frac{1}{1-u} \min \big(n, \frac{1}{1-u} \big)^\alpha+ n \min \big(n,\frac{1}{1-u}\big)^\alpha \Big) \d u\\
	&\le \frac{C}{n^\alpha \log \frac{N}{n}} \int_{\delta}^{n} \Big( \frac{1}{x} \min \big( n,\frac{n}{x} \big)^\alpha + \min \big(n,\frac{n}{x} \big)^\alpha \Big) \d x = o(1) 
	\end{align*}
	as $N/n \to \infty $. Also, $|\Theta_{N,n,\delta}^2| \le C\delta $ can be made arbitrary small by a suitable choice of $\delta >0$.
	Consider
	\begin{align*}
	\Theta_{N,n,\delta}^3 = \psi_1 \int_{\delta}^{\delta \frac{N}{n}} \big( \exp \big\{ - \frac{K_{N,n} (1- \frac{x}{N} )}{x \log \frac{N}{n}} \big\} - 1 \big) \d x \sim - \psi_1 \int_{\delta}^{\delta \frac{N}{n}} \frac{K_{N,n} (1- \frac{x}{N} )}{x \log \frac{N}{n}} \d x =: \Theta_{N,n,\delta}^4
	\end{align*}
	since $|\Theta_{N,n,\delta}^3 - \Theta_{N,n,\delta}^4| \le C \int_\delta^{\delta (N/n)} \frac{\d x}{(x \log (N/n))^2}
	= o(1)$, as $N/n \to \infty$, follows from \eqref{ineq:exp} and the bound $\sup_{0< x < \delta N/n} | K_{N,n}(1-\frac{x}{N})| \le C$, which is a consequence of \eqref{ineq:dct}. Rewrite $\Theta $ similarly to $\Theta_{N,n,\delta}^4$, viz.,
	\begin{align*}
	\Theta = - \psi_1 K \int_{\delta}^{\delta \frac{N}{n}} \frac{\d x}{x \log \frac{N}{n}} \quad \text{with } K := \frac{1}{\alpha} \big| \sum_{j=1}^d \theta_j \tau_j \big|^\alpha. 
	\end{align*}
	Thus, \eqref{i00} follows (c.f.\ \eqref{THN3}) from
	\begin{eqnarray}\label{THN4}
	\limsup_{N,n, N/n \to \infty} \sup_{\delta < x < \delta (N/n)} \big|K_{N,n} \big( 1-\frac{x}{N} \big)- K\big| \le \epsilon(\delta)
	\end{eqnarray}
	where $\lim_{\delta \to 0} \epsilon(\delta) = 0$. Towards this end, write $K_{N,n} (1- \frac{x}{N} ) = \tilde K( \frac{N}{x}, n) $
	similarly as in \eqref{THN3} above, where
	\begin{eqnarray}
	\tilde K(y,z)
	:= \int_{\R} \Big| \sum_{j=1}^d \theta_j \int_{\R} \big( 1- \frac{1}{y} \big)^{[z t]-[ys]} \1(1\vee [ys] \le [zt] \le [z\tau_j]) \d t \Big|^\alpha \d s,
	\quad y, z >0.
	\end{eqnarray}
	Then using the DCT similarly as in \eqref{tildeK} above, it follows that $\lim_{y,z,y/z \to \infty} \tilde K(y,z) = K $, implying that
	$\limsup_{y,z,y/z \to \infty} \sup_{y/z > 1/\delta} |\tilde K(y,z) - K| \le \epsilon (\delta)$ with $\epsilon(\delta) $ as in
	\eqref{THN4}, hence \eqref{THN4}, thus completing the proof of   \eqref{i00}.

\medskip

\noi {\it Proof of \eqref{ii}} (case $\mu = 0$, $0< \beta < \alpha$). The r.h.s.\ of \eqref{lim} can be written as
\begin{align}\label{chfcalW}
\Theta&= \ 
\log \E \e^{ \i W^{1/\alpha}_{\beta/\alpha, \alpha,\beta} \sum_{i=1}^d (\sum_{j=i}^d \theta_j ) ( \zeta_\alpha (\tau_i) - \zeta_\alpha (\tau_{i-1}) )  }\nn \\
&= \ \log \E \e^{ - W_{\beta/\alpha,\alpha,\beta}
\sum_{i=1}^d (\tau_i - \tau_{i-1})  |\sum_{j=i}^d \theta_j |^\alpha 
} = - \kappa_{\beta/\alpha,\alpha,\beta} \big( \sum_{i=1}^d (\tau_i - \tau_{i-1} ) \big| \sum_{j=i}^d \theta_j \big|^\alpha 
\big)^{\beta/\alpha}\nn \\
&= \ \psi_1  \int_0^\infty \big( \exp \big\{ - \frac{1}{x^\alpha} \sum_{i=1}^d (\tau_i-\tau_{i-1}) \big| \sum_{j=i}^d \theta_j \big|^\alpha
\big\} - 1 \big) x^{\beta-1} \d x
\end{align}
with $\tau_0 := 0$ and $\kappa_{\beta/\alpha,\alpha,\beta}$ as in \eqref{kappas}.
After a change of variable, we get
$
\Theta_{N,n} = \psi_1 \int_0^{N^{1/\beta}} ( \e^{-K_{N,n} (x)} - 1 ) x^{\beta-1} \d x,
$
where $K_{N,n} (x) = K_{N,n}^- (x)+K_{N,n}^+ (x)$ is given as in \eqref{def:KNn} with $B_{N,n} = N^{1/\beta}$. Rewrite
$K_{N,n}^+ (x) = \int_{\R} | \tilde \vartheta_{N,n} (x,s) |^\alpha 
\d s,$
where
\begin{eqnarray*}
&\tilde \vartheta_{N,n} (x,s) 
:= \frac{1}{x} \sum_{j=1}^d \theta_j \big(1- \big( 1-\frac{x}{N^{1/\beta}} \big)^{[n\tau_j]-[ns]+1} \big) \1 (0 < [ns] \le [n\tau_j] )
\to \frac{1}{x} \sum_{j=1}^d \theta_j \1 (0 < s \le \tau_j)
\end{eqnarray*}
for $x \in \R_+$, $s \in \R$, since $n/N^{1/\beta} \to \infty$. Therefore,
$ 
K_{N,n}^+ (x) \to \frac{1}{x^\alpha} \sum_{i=1}^d (\tau_i-\tau_{i-1}) | \sum_{j=i}^d \theta_j |^\alpha 
$ 
for $x \in \R_+$. From \eqref{ineq:dct}, it follows that $| K_{N,n}^- (x) | \le C A_{N,n}^{-\alpha} ( \frac{x}{N^{1/\beta}} )^{-(1+\alpha)} = C \mu_{N,n} x^{-(1+\alpha)} = o (1)$ for $x \in \R_+$.
Moreover, $|K_{N,n}(x)| \le C x^{-\alpha}, \ x \in (1,N^{1/\beta}]$.
Since $0 < \beta < \alpha$, the DCT and \eqref{ineq:exp}
imply $\Theta_{N,n} \to \Theta$, proving \eqref{ii},  thereby completing the proof of
Theorem~\ref{thm2}(i).	

\medskip

\noi {\it Proof of Theorem~\ref{thm2}(iii)}. We use the representation in \eqref{ThetaNn} with $A_{N,n} = (N n)^{1/\alpha} $ and
$\phi(u)$
instead of $\psi_1 (1-u)^{\beta-1}$, $u \in [0,1)$. We have that $ \Theta_{N,n} = \int_0^1 N (\e^{-N^{-1} K_{n}(u)} - 1) \phi(u) \d u$, where
$K_{n} (u) = K^-_n(u) +  K^+_n(u) := n^{-1} \sum_{s \le 0} | \vartheta_n (u,s) |^\alpha
+ n^{-1} \sum_{s > 0} | \vartheta_n (u,s) |^\alpha$ with $\vartheta_n (u,s)$ defined in \eqref{def:vartheta}.
Rewrite
$ K_{n}^+ (u) = \int_{\R} |\vartheta_n(u,[ns])|^\alpha
\d s, $
	where
	$\vartheta_n (u,[ns]) = (1-u)^{-1} \sum_{j=1}^d \theta_j (1-u^{[n\tau_j]-[n s] +1}) \1 (0 < [n s] \le [n \tau_j]) \to (1-u)^{-1} \sum_{j=1}^d \theta_j \1 (0 < s \le \tau_j)$ for $u \in [0,1)$, $s \in \R$. Hence,
	\begin{eqnarray}\label{lim:Kplus}
&K_{n}^+ (u) \to (1-u)^{-\alpha} \sum_{i=1}^d (\tau_i - \tau_{i-1}) \big| \sum_{j=i}^{d} \theta_j \big|^\alpha
	\end{eqnarray}
	for $u \in [0,1)$, where $\tau_0 := 0$. From \eqref{ineq:dct}, for $u \in [0,1)$, we further obtain that $K^-_n(u)\to 0$
	and that
\begin{eqnarray} \label{Kplusbdd}
&|K_{n}^- (u)| \le
	\frac{C}{n(1-u)} \min \big(n, \frac{1}{1-u} \big)^\alpha
\le  C \begin{cases} (1-u)^{-\alpha},  &\alpha \ge 1, \\
(1-u)^{-1}, &\alpha < 1.
\end{cases}
\end{eqnarray}
Since $|K_{n}^+(u)| \le C (1-u)^{-\alpha}$, $|K_{n} (u)| \le C(1-u)^{-\max(\alpha,1)} =: \bar K(u)$, where $\int_0^1 \bar K(u) \phi (u) \d u < \infty  $ due to the fact that
$\beta > \max (\alpha, 1)$.
Then, by the DCT,
we conclude that
\begin{align*}
\Theta_{N,n} &\to - \kappa_\alpha \sum_{i=1}^d (\tau_i - \tau_{i-1}) | \sum_{j=i}^{d} \theta_j |^\alpha 
= \log \E \e^{\i \kappa_\alpha^{1/\alpha} \sum_{j=1}^{d} \theta_j \zeta_\alpha (\tau_j) },
	\end{align*}
where (recall) $\kappa_\alpha  = \E (1-a)^{-\alpha} < \infty$.
	Theorem~\ref{thm2}(iii) is proved.

\medskip

\noi {\it Proof of Theorem~\ref{thm2}(ii)}. Recall that $0< \alpha < \beta < 1 $.
Set $\mu_{N,n} := N^{1/(\gamma\beta)}/n \to \mu \in [0,\infty]$,  $A_{N,n} := N^{1/(\alpha \beta)} n$ if $\mu = \infty$;
and $A_{N,n} := (Nn)^{1/\alpha}$ if $\mu \in [0, \infty)$.
Consider  separately  terms $S_{N,n}^\pm (\tau)$  in the 
decomposition 
$S_{N,n}(\tau) = S_{N,n}^- (\tau) + S_{N,n}^+ (\tau)$, where
$$
S_{N,n}^- (\tau) := \sum_{i=1}^N \big( \sum_{t=1}^{[n\tau]} a_i^t \big) X_i(0), \quad S_{N,n}^+ (\tau) := \sum_{i=1}^N \sum_{s>0} \big( \sum_{t=1}^{[n\tau]} a_i^{t-s} \1 (s \le t) \big) \vep_i (s).
$$
From the proofs of \eqref{i0}, \eqref{lim:levy} (in particular, \eqref{lim:Kminus}, \eqref{lim:Kplus}, \eqref{Kplusbdd}) we 
see that
\begin{align}
N^{-1/(\alpha \beta)} n^{-1} S^-_{N,n} (\tau) &\to_{\rm fdd} V_{\alpha,\beta}\, \tau \hskip1cm  \text{as } N,n, N^{1/\beta}/n \to \infty,
\hskip1.75cm \text{for } 0 < \beta < 1,\label{S0} \\
(N n)^{-1/\alpha} S^+_{N,n} (\tau) &\to_{\rm fdd} \kappa^{1/\alpha}_\alpha \zeta_\alpha (\tau) \quad \text{as } N,n \to \infty
\text{ in arbitrary way,} \quad \text{for }  \beta > \alpha.\label{S1}
\end{align}

\medskip

\noi {\it Proof of \eqref{i1}} (case $\mu = \infty$) follows from \eqref{S0} and  \eqref{S1}
since  $\mu_{N,n} \to \infty $ implies       
$(Nn)^{1/\alpha} = o (A_{N,n})$.

\medskip

\noi {\it Proof of \eqref{ii1}} (case $\mu = 0$)
follows in view of \eqref{S1},
if we prove that 
\begin{equation}\label{rmd}
S_{N,n}^- (\tau) = o_{\rm p} ( A_{N,n} ).
\end{equation}
For any $\theta \in \R$, consider
\begin{align*}
\Theta_{N,n} &:= N \E \big[ \exp \big\{ \i \theta A_{N,n}^{-1} S_{1,n}^- (\tau) \big\} - 1 \big]\\
&= \psi_1 N \int_0^1 \big( \exp \big\{ - A_{N,n}^{-\alpha} \sum_{s \le 0} | c_{[n\tau]} (u,s) |^\alpha |\theta|^\alpha \big\} - 1 \big) (1-u)^{\beta-1} \d u,
\end{align*}
where $c_{[n\tau]}(u,s)$ is given in \eqref{ineq:dct}.
Using \eqref{ineq:dct}--\eqref{ineq:exp} and changing a variable, we obtain
$$
|\Theta_{N,n}| \le C N \int_0^1 \min \big( 1, \frac{n^{\alpha-1}}{N(1-u)} \big) (1-u)^{\beta-1} \d u \le C \int_0^{N^{1/\beta}} \min \big( 1, \frac{\mu_{N,n}^{1-\alpha}}{x} \big) x^{\beta-1} \d x = o (1).
$$
This proves \eqref{rmd}, hence, \eqref{ii1}.	

\medskip
	
\noi {\it Proof of \eqref{iii1}} (case $\mu \in (0,\infty)$).
For $N,n$ large enough, decompose $\Theta_{N,n} = \Theta_{N,n}^- + \Theta_{N,n}^+$ in \eqref{ThetaNn}, with $\psi_1 (1-u)^{\beta -1}$
replaced by $\phi(u)$, as
\begin{align*}
\Theta_{N,n}^- := N \int_0^1 ( \e^{ - K_{N,n}^- (u) } - 1 ) \phi (u) \d u, \quad \Theta_{N,n}^+ := N \int_0^1 \e^{ - K_{N,n}^- (u) } ( \e^{ - K_{N,n}^+ (u) } - 1 ) \phi (u) \d u,
\end{align*}
where
$$
K_{N,n}^- (u) := A_{N,n}^{-\alpha} \sum_{s\le 0} |\vartheta_n(u,s)|^\alpha, 
\quad K_{N,n}^+ (u) := A_{N,n}^{-\alpha} \sum_{s > 0} |\vartheta_n(u,s)|^\alpha.  
$$
Since $\mu_{N,n} \to \mu \in (0,\infty)$ implies $N^{1/\beta}/n \to \infty$, by \eqref{S0} we have that
$\Theta_{N,n}^- \to \log \E \e^{\i \mu^{(1/\alpha)- 1} (\sum_{j=1}^d \theta_j \tau_j) V_{\alpha,\beta}}.$
Next, using $K_{N,n}^- (u)  \to 0$, $|\e^{-K_{N,n}^- (u)} | \le C$, $u \in [0,1)$, similarly to the proof of \eqref{S1},
we obtain $\Theta_{N,n}^+ \to \log \E \e^{\i \kappa^{1/\alpha}_\alpha \sum_{j=1}^d \theta_j \zeta_\alpha (\tau_j) }$. Hence,
\eqref{iii1} follows, including the independence of $V_{\alpha,\beta}$ and $\{\zeta_\alpha(\tau),\, \tau \ge 0\}$.
Theorem~\ref{thm2} is proved.
\end{proof}


\begin{proof}[Proof of Proposition~\ref{prop1}] As noted in Section~\ref{sec3}, for $\alpha =2 $, the proposition is
proved in
\cite[Propositions~3.1, 3.2]{pili14}. The subsequent proof for $0 < \alpha < 2$ uses similar argument.
	
	\medskip
	
\noi (i) Write ${\cal Z}_{\alpha,\beta} (\tau) = {\cal Z}^-_{\alpha,\beta} (\tau) + {\cal Z}^+_{\alpha,\beta}(\tau), $
where ${\cal Z}^+_{\alpha,\beta} (\tau) :=
\int_{[1,\infty) \times D(\R)} z (\tau; x ) (N(\d x, \d \zeta)- \nu (\d x, \d \zeta) \1 (\alpha > 1) ), $  $
{\cal Z}^-_{\alpha,\beta} (\tau)
:= \int_{(0,1) \times D(\R)} z (\tau; x ) N(\d x, \d \zeta)$. Next, let $I(p,\tau) := I^-(p,\tau) + I^+(p,\tau), $ where
$
I^+(p,\tau) := \int_1^\infty \E_\alpha |z(\tau;x)|^p x^{\beta-1} \d x$, $I^-(p,\tau) := \int_0^1 \E_\alpha |z(\tau;x)|^p x^{\beta-1} \d x$.  Then ${\cal Z}^\pm_{\alpha,\beta} (\tau)$ are well defined
if $I^\pm (p,\tau) < \infty $ for some $0< p < \alpha$ in which case
\begin{eqnarray} \label{EZp}
&\E |{\cal Z}^\pm_{\alpha,\beta}(\tau)|^p \le
C I^\pm (p, \tau)
\end{eqnarray}
with $C = C(p)$ depending on $p$ alone;
see \cite[(3.3)]{pili14}. By well-known property of
an $\alpha$-stable stochastic integral in \eqref{zdef2},
$ \E_\alpha | z(\tau;x) |^p = C (\int_\R |f_\tau (x,s)|^\alpha \d s)^{p/\alpha}$, $\forall p \in(0, \alpha)$; see \cite[Property~1.2.17, Proposition~3.4.1]{samo1994}.
Then, using \eqref{ineq:exp}, we obtain
\begin{align}\label{ineq:f}
\int_{\R} |f_\tau (x,s)|^\alpha \d s&= \frac{(1-\e^{-x\tau})^\alpha}{\alpha x^{1+\alpha}} + \frac{1}{x^{\alpha}} \int_0^\tau ( 1-\e^{-xs} )^\alpha \d s \le \frac{1}{\alpha} \min \big ( \frac{\tau^\alpha}{x}, \frac{1}{x^{1+\alpha}} \big) + \min \big(  \frac{\tau}{x^\alpha}, \tau^{1+\alpha} \big).
\end{align}
Let first $1< \alpha < 2$. Then  \eqref{ineq:f} simplifies to
$\int_\R |f_\tau (x,s)|^\alpha \d s \le C \min ( \frac{\tau^\alpha}{x}, \frac{\tau}{x^\alpha} ) $
implying
\begin{eqnarray}\label{Ip}
&I(p,\tau)\le C \big(\tau^p \int_0^{1/\tau}  x^{\beta-1-p/\alpha} \d x + \tau^{p/\alpha} \int_{1/\tau}^\infty x^{\beta-1 -p} \d x \big)
\le C \tau^{p + p/\alpha - \beta}
\end{eqnarray}
for $\beta < p < \min (\alpha, \alpha \beta) = \alpha \min(1, \beta),$
with $C >0$ independent of $\tau >0$. Obviously, for given $0< \beta < \alpha $, such $p $ exists implying the existence of the Poisson stochastic integrals $
{\cal Z}^\pm_{\alpha,\beta} (\tau)$ and ${\cal Z}_{\alpha,\beta} (\tau)$.

Next, let $0< \alpha \le 1 $. Here, we need to discuss the existence of  ${\cal Z}^\pm_{\alpha,\beta} (\tau)$ separately. From
\eqref{ineq:f}, we have
\begin{eqnarray}\label{Ip1}
&I^+(p^+,1) \le  C \int_1^\infty  x^{\beta-1-p^+} \d x \le \infty,  \quad \beta < p^+ < \alpha, \\
&\ I^-(p^-,1) \le C \int_0^1  x^{\beta-1-p^-/\alpha} \d x
\le \infty,  \quad p^- < \alpha\beta < \alpha. \nn
 \end{eqnarray}
Clearly, for any $0< \beta < \alpha \le 1, $ such $p^\pm $ satisfying \eqref{Ip1} exist, implying the existence of
${\cal Z}^\pm_{\alpha,\beta} (\tau)$ and ${\cal Z}_{\alpha,\beta} (\tau)$ for $\tau=1$, and the last result extends
to all $\tau >0$ in an obvious way.   The stationarity of increments is immediate from \eqref{calZfd}.
Infinite divisibility and the form of the characteristic function in \eqref{calZfd}	follow from general properties
of Poisson stochastic integrals, see e.g.\ \cite[(3.1)]{pili14}.

\medskip
	
\noi (ii) Follows from \eqref{Ip}, \eqref{Ip1}  and \eqref{EZp}.

\medskip

\noi (iii) Follows from (ii) and $\E |{\cal Z}_{\alpha,\beta}(\tau)| \le CI(1,\tau) < \infty $ since
$1< \alpha \min (\beta, 1)$ is equivalent to $\alpha > 1, \alpha \beta > 1 $.

\medskip
	
\noi (iv) Follows from Kolmogorov's criterion,  stationarity of increments of ${\cal Z}_{\alpha,\beta}$, and \eqref{EZp}, \eqref{Ip} since, for
$1/\alpha < \beta < \alpha$,  we can find $p$ sufficiently close to $\alpha \min(\beta, 1)$ such that the exponent of $\tau$
on the r.h.s.\ of \eqref{Ip} is greater than 1: $p+p/\alpha -\beta > 1$.

\medskip
	
\noi (v) For brevity,
we assume $\omega (\theta) \equiv 1$,
$\psi_1 = 1$ and restrict the proof to one-dimensional convergence at $\tau > 0$.
Let $1 < \beta < \alpha$ and $H := 1-\frac{\beta-1}{\alpha} > 0$. For any $\theta \in \R$, we have
$$
\E \e^{ \i \theta c^{-H} {\cal Z}_{\alpha,\beta} (c \tau) }
= \exp \Big\{ \int_{\R_+} \big( \exp \big\{ - c^{\beta} | \theta|^\alpha \int_{\R} |f_{\tau} (c x,s)|^\alpha \d s  \big\} -1 \big) x^{\beta-1} \d x \Big\}
$$
since $\int_\R |f_{c \tau }(x,s)|^\alpha \d s = c^{1+\alpha} \int_\R |f_{\tau }(c x,s)|^\alpha \d s$.
By change of variables, we further rewrite
\begin{align*}
\E \e^{ \i \theta c^{-H} {\cal Z}_{\alpha,\beta}(c \tau) }
&= \exp \Big\{ \int_{\R_+} c^{-\beta} \big( \exp \big\{ - c^{\beta} | \theta|^\alpha \int_{\R} |f_{\tau} (x,s)|^\alpha \d s  \big\} -1 \big) x^{\beta-1} \d x \Big\}\\
&\to \exp \Big\{ - | \theta|^\alpha \int_{\R_+ \times \R} |f_{\tau} (x,s)|^\alpha x^{\beta-1} \d x \d s \Big\} =
\E \e^{\i \theta \Lambda_{\alpha,\beta} (\tau)}, \quad c \to 0,
\end{align*}
where the convergence follows by the DCT (the domination can be verified using \eqref{ineq:exp} and \eqref{ineq:f}).
	
Next, let $0 < \beta < \min(\alpha, 1)$. For any $\theta \in \R$, we have
$
\E \e^{\i \theta c^{-1} {\cal Z}_{\alpha,\beta}(c \tau)} = \exp \{ \int_{\R_+} ( \e^{ - |\theta|^\alpha K_c (x) } - 1 ) x^{\beta-1} \d x \},
$
where
$$
K_c (x) := \frac{1}{c^\alpha} \int_{\R} |f_{c\tau} (x,s)|^\alpha \d s = \frac{1}{\alpha x} \big( \frac{1-\e^{-c x \tau}}{cx} \big)^\alpha + \int_0^{c \tau} \big( \frac{1 - \e^{-xs}}{cx} \big)^\alpha \d s
\to \frac{\tau^\alpha}{\alpha x}, \quad c \to 0.
$$
For $K_c(x)$, we can find a dominating function using \eqref{ineq:exp} and \eqref{ineq:f}, because the latter inequality gives
$K_c (x) \le C x^{-1}$ if $\alpha \ge 1$ and $K_c (x) \le C \max (x^{-1}, c^{1-\alpha} x^{-\alpha}) \le C x^{-\alpha}$ if $\alpha < 1$ for $x > 1$.
Then, by the DCT, we obtain $\E \e^{\i c^{-1} {\cal Z}_{\alpha,\beta}(c \tau)} \to \exp \{ \int_{\R_+} (\e^{-(\alpha x)^{-1} \tau^\alpha |\theta|^\alpha} - 1) x^{\beta-1} \d x \} = \E \e^{\i \theta \tau V_{\alpha,\beta}}$, $c \to 0$, see \eqref{Wdef}--\eqref{VWchf}.


Let $1 = \beta < \alpha$. For any $\theta \in \R$, we consider $\E \e^{\i \theta (\log(1/c) )^{-1/\alpha} c^{-1}{\cal Z}_{\alpha,\beta}(c \tau)} = \e^{\sum_{i=1}^3 I_{c,\delta}^{i}}$,
	where
	\begin{align*}
	\sum_{i=1}^3I_{c,\delta}^{i} :=  \big\{ \int_0^\delta + \int_{\frac{\delta}{c}}^\infty  + \int_\delta^{\frac{\delta}{c}} \big\} \big( \e^{-|\theta|^\alpha (\log\frac{1}{c})^{-1} K_c(x) } - 1 \big) \d x \quad \text{with } K_c (x) := \frac{1}{c^\alpha} \int_{\R} |f_{c\tau} (x,s)|^\alpha \d s,
	\end{align*}
	the same as above, given a $\delta > 0$. Then $I_{c,\delta}^1 = o(1)$ and by \eqref{ineq:exp}, \eqref{ineq:f},
	\begin{align*}
	|I^2_{c,\delta}| &\le \frac{C}{c^\alpha \log \frac{1}{c}} \int_{\frac{\delta}{c}}^\infty \big( \frac{1}{\alpha x} \min \big( c \tau, \frac{1}{x} \big)^{\alpha} + c \tau \min \big( \frac{1}{x}, c\tau \big)^\alpha \big) \d x\\
	&\le \frac{C}{\log \frac{1}{c}} \int_{\delta}^\infty \big( \frac{1}{\alpha x} \min \big( \tau, \frac{1}{x} \big)^{\alpha} + \tau \min \big( \frac{1}{x}, \tau \big)^\alpha \big) \d x = o(1), \quad c \to 0.
	\end{align*}
	With the notation $\tilde K (cx):= x K_c (x)$, $x \in \R_+$, using \eqref{ineq:exp} and $\tilde K (c x) \le C$, $x>\delta$, we have that
	$$
	I_{c,\delta}^3 \sim -|\theta|^\alpha \int_\delta^{\frac{\delta}{c}} \frac{\tilde K(cx)}{x \log \frac{1}{c}} \d x, \quad c \to 0,
	$$
	for every $\delta >0$. Furthermore, $\limsup_{c \to 0} \sup_{\delta <x<\frac{\delta}{c}} |\tilde K(cx) - \frac{\tau^\alpha}{\alpha} | < \epsilon(\delta)$ with $\lim_{\delta \to 0} \epsilon(\delta) = 0$, since
	$$
	\tilde K (w) = \frac{1}{\alpha} \big( \frac{1-\e^{-w \tau}}{w} \big)^\alpha + w \int_0^{\tau} \big( \frac{1 - \e^{-w s} }{w} \big)^\alpha \d s
	\to \frac{\tau^\alpha}{\alpha}, \quad w \to 0.
	$$
	Hence, $\lim_{\delta \to 0} \limsup_{c \to 0} |I_{c,\delta}^3 + \frac{\tau^\alpha|\theta|^\alpha}{\alpha}| = 0$, finishing the proof of
	$
	\lim_{c \to 0} \E \e^{\i \theta (\log (1/c))^{-1/\alpha} c^{-1} {\cal Z}_{\alpha,\beta}(c \tau)} = \E \e^{\i \theta \tau V_{\alpha,1}}.
	$
	
Finally, consider the large scale limit of  ${\cal  Z}_{\alpha,\beta}$ as $c \to \infty $ for $0 < \beta < \alpha$. Then $\E \e^{\i \theta c^{-1/\alpha} {\cal Z}_{\alpha,\beta} (c \tau)} = \exp \{ \int_{\R_+} (\e^{-|\theta|^\alpha K_c (x) } - 1) x^{\beta-1} \d x \}$, where, using the scaling property,
	\begin{align*}
	K_c (x) &:= \frac{1}{c} \int_{\R} |f_{c\tau} (x,s) |^\alpha \d s = c^{\alpha} \int_{\R} |f_{\tau} (c x,s) |^\alpha \d s =
	\frac{(1-\e^{- c x \tau})^\alpha}{c \alpha x^{1+\alpha}} + \frac{1}{x^\alpha} \int_0^{\tau} ( 1 - \e^{-c xs} )^\alpha \d s \to \frac{\tau}{x^\alpha}, \quad c \to \infty.
	\end{align*}
It is obvious that, for $x>1$, $K_c(x)\leq x^{-\alpha}((c\tau)^{-1}+\tau)=O(x^{-\alpha})$. Therefore, by the DCT, $\E \e^{\i \theta c^{-1/\alpha} {\cal Z}(c \tau)} \to \exp \{ \int_{\R_+} ( \e^{- |\theta|^\alpha x^{-\alpha} \tau} - 1 ) x^{\beta-1} \d x \} = \E \e^{\i \theta {\cal W}_{\alpha,\beta} (\tau)}$, $c \to \infty$, see \eqref{Wdef}--\eqref{VWchf}. This proves part (v) and completes the proof of Proposition~\ref{prop1}.
\end{proof}

\bigskip

\section*{Acknowledgments}

We thank an anonymous referee for useful comments.
Vytaut{\.e} Pilipauskait{\.e} acknowledges the financial support from the project ``Ambit fields: probabilistic properties and statistical inference'' funded by Villum Fonden.


\bigskip
\footnotesize

\end{document}